\title{Non-Semisimple Planar Algebras from the\\ Representation Theory of $\bar{U}_{q}(\mathfrak{sl}_{2})$}
\author{Stephen Moore \\
	\multicolumn{1}{p{.8\textwidth}}{\centering\emph{Dept. of Mathematics, Ben Gurion University, Beer-Sheva, Israel,\\ stm862@gmail.com}}}
\date{2 August, 2018}
\newtheorem{thm}{Theorem}[section]
\newtheorem{prop}{Proposition}[section]
\newtheorem{conj}{Conjecture}[section]
\newtheorem{corr}{Corollary}[prop]
\begin{document}

\maketitle

\begin{abstract}
We describe the generators and prove a number of relations for the construction of a planar algebra from the restricted quantum group $\bar{U}_{q}(\mathfrak{sl}_{2})$. This is a diagrammatic description of $End_{\bar{U}_{q}(\mathfrak{sl}_{2})}(X^{\otimes n})$, where $X:=\mathcal{X}^{+}_{2}$ is a two dimensional $\bar{U}_{q}(\mathfrak{sl}_{2})$ module. 
\end{abstract}
\let\thefootnote\relax\footnotetext{Keywords: Planar Algebras, Quantum Groups, Restricted Quantum Groups, Tensor Categories}
\section{Introduction}
The restricted quantum group, $\bar{U}_{q}(\mathfrak{sl}_{2})$, for $q=e^{\frac{i\pi}{p}}$, $2\leq p\in\mathbb{N}$, is a finite dimensional quotient of $U_{q}(\mathfrak{sl}_{2})$ with non-semisimple representation theory, and was conjectured in \cite{FGST3} to have a representation category equivalent to the representation category of the $W_{p}$ logarithmic conformal field theory. An equivalence as abelian categories was proven in \cite{NaTs2}, however \cite{KoSa} showed that $\bar{U}_{q}(\mathfrak{sl}_{2})$ has modules whose tensor product does not commute, and so the category is not braided. This example does not appear in the subcategory generated by irreducible modules, which is the focus of our construction.\\

Planar algebras are a type of diagrammatic algebra first introduced as an axiomatization of the standard invariant of subfactors, and shown to have close relations to statistical mechanics and knot theory. The standard example of a planar algebra is the Temperley-Lieb algebra, which can be constructed from $U_{q}(\mathfrak{sl}_{2})$, as a description of $End_{U_{q}(\mathfrak{sl}_{2})}(X^{\otimes n})$, where $X$ is a two-dimensional irreducible module \cite{FrKhov, Martin}. We aim to generalize this construction to the restricted quantum group.\\

The $\bar{U}_{q}(\mathfrak{sl}_{2})$ planar algebra is a diagrammatic description of $End_{\bar{U}_{q}(\mathfrak{sl}_{2})}(X^{\otimes n})$, where $X$ is a two dimensional $\bar{U}_{q}(\mathfrak{sl}_{2})$ module. It was shown in \cite{GST} that for $n<2p-1$, this is isomorphic to the Temperley-Lieb algebra with parameter $q+q^{-1}$, and for $n\geq 2p-1$ contains extra generators, $\alpha_{i}$ and $\beta_{i}$, $1\leq i\leq n-2p+2$. We define these generators explicitly in terms of the $\bar{U}_{q}(\mathfrak{sl}_{2})$ action which allow us to prove combinatorially a number of relations on them. The main focus of the paper is the relations in Theorem \ref{thm} and their proofs. We also conjecture a formula for the dimensions of the planar algebra at the end of Section \ref{Uq}. Some of these relations were previously proven in \cite{GST}, however we include proofs here for completeness. Other relations in Theorem \ref{thm} generalize their results for $p=2$.\\

It was conjectured in \cite{GST} that $\alpha_{i}$ and $\beta_{i}$, along with the Temperley-Lieb algebra, generate $End_{\bar{U}_{q}(\mathfrak{sl}_{2})}(X^{\otimes n})$. We do not claim to have a proof of this, although we note that $\alpha_{1}\alpha_{p+1}$ and $\beta_{1}\beta_{p+1}$ give maps between the highest and lowest weight copies of $\mathcal{P}^{+}_{p}$ appearing in the decomposition of $X^{\otimes 3p-1}$, which is where we would expect any new generators to appear, and so lends evidence to support the conjecture. Further, we make the following conjecture:
\begin{conj}
The extension of the Temperley-Lieb algebra by the generators $\alpha_{i}$ and $\beta_{i}$ with the relations in Theorem \ref{thm} fully describe $End_{\bar{U}_{q}(\mathfrak{sl}_{2})}(X^{\otimes n})$.	
\end{conj}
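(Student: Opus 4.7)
The plan is a dimension-matching argument. Let $\mathcal{A}_n$ denote the abstract algebra presented by the Temperley-Lieb diagrams together with generators $\alpha_i, \beta_i$ modulo the relations of Theorem \ref{thm}. All of these relations hold in $\mathrm{End}_{\bar{U}_q(\mathfrak{sl}_2)}(X^{\otimes n})$, so there is a canonical surjection $\mathcal{A}_n \twoheadrightarrow \mathrm{End}_{\bar{U}_q(\mathfrak{sl}_2)}(X^{\otimes n})$ (assuming the generation conjecture stated just before the conjecture). I would try to show this is an isomorphism by (i) computing $\dim \mathrm{End}_{\bar{U}_q(\mathfrak{sl}_2)}(X^{\otimes n})$ explicitly, ideally matching the formula conjectured at the end of Section \ref{Uq}, and (ii) giving a normal-form argument that bounds $\dim \mathcal{A}_n$ above by the same value.

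For step (i), decompose $X^{\otimes n}$ into indecomposables $\mathcal{X}^{\pm}_s$ and projective covers $\mathcal{P}^{\pm}_s$ via Krull--Schmidt. The multiplicities $m_M$ can be computed recursively from the $\bar{U}_q(\mathfrak{sl}_2)$ fusion rules for tensoring by $X=\mathcal{X}^+_2$, and then
\[
\dim \mathrm{End}_{\bar{U}_q(\mathfrak{sl}_2)}(X^{\otimes n}) \;=\; \sum_{M,N} m_M\, m_N\, \dim \mathrm{Hom}_{\bar{U}_q(\mathfrak{sl}_2)}(M,N),
\]
where the Hom-dimensions between indecomposables are read off the Loewy structure of the $\mathcal{P}^{\varepsilon}_s$. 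Non-semisimplicity contributes here in two ways: $\dim \mathrm{End}(\mathcal{P}^{\varepsilon}_s) > 1$ (it records the radical filtration), and there are nonzero Homs between distinct indecomposables that share a composition factor. For step (ii), I would develop a rewriting procedure on words in the generators that first reduces any Temperley-Lieb factor to a reduced diagram, then uses the absorption and $\alpha_i \alpha_{i+k}$-type identities from Theorem \ref{thm} to push all $\alpha$'s into a leftmost canonical configuration, symmetrically for $\beta$'s, and finally parametrises the normal forms by link-pattern-like combinatorial data decorated with admissible $\alpha,\beta$ labels.

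The main obstacle is confluence and sharpness of this normal-form reduction: many of the relations in Theorem \ref{thm} are two-sided identities rather than clear length-reducing rewrites, so termination and uniqueness will require either a Diamond-Lemma style critical-pair analysis, or an independent diagrammatic model into which $\mathcal{A}_n$ maps, whose faithfulness is verified on test vectors in $X^{\otimes n}$. A compounding difficulty is that the generating set itself grows with $n$: any induction on $n$ must simultaneously control how the newly admitted $\alpha_{n-2p+2}, \beta_{n-2p+2}$ interact with every existing normal form, without forcing any new relation among the old generators. The test case $\alpha_1 \alpha_{p+1}$ (and $\beta_1\beta_{p+1}$) in $X^{\otimes 3p-1}$ mentioned in the introduction, which gives a nonzero map between the extremal copies of $\mathcal{P}^+_p$, is the first nontrivial instance that a candidate normal form must accommodate without collapsing, and will be the natural diagnostic for whether the relations of Theorem \ref{thm} are already complete.
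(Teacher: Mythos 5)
The statement you are asked to prove is labelled a \emph{Conjecture} in the paper, and the paper offers no proof of it --- only heuristic evidence (the observation that $\alpha_{1}\alpha_{p+1}$ and $\beta_{1}\beta_{p+1}$ realise the expected maps between copies of $\mathcal{P}^{+}_{p}$ in $X^{\otimes 3p-1}$, and the claim that knowing diagrammatic descriptions of all module morphisms makes unknown relations ``less likely''). So there is no paper proof to compare against, and your proposal should be judged on whether it closes the gap on its own. It does not: it is a correct and sensible \emph{strategy} (surjection from the abstractly presented algebra $\mathcal{A}_{n}$, match dimensions, bound $\dim\mathcal{A}_{n}$ above by a normal form), but both halves of the strategy are left unexecuted exactly where the difficulty lies.

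Concretely: (a) the surjection $\mathcal{A}_{n}\twoheadrightarrow End_{\bar{U}_{q}(\mathfrak{sl}_{2})}(X^{\otimes n})$ already presupposes the separate, also unproven, generation conjecture of \cite{GST}; you flag this parenthetically but it means even the ``easy'' direction is conditional. (b) Step (i) is essentially already in the paper --- the formula for $D_{n}$ in terms of multiplicities is given in Section \ref{Uq}, and the closed form you would want to match is itself only conjectural there --- so it contributes nothing new. (c) Step (ii), the spanning/normal-form argument, is the entire content of the conjecture, and you correctly identify that the relations of Theorem \ref{thm} are not obviously orientable into a terminating confluent rewriting system; no critical-pair analysis, no candidate normal form count, and no verification that the count equals $D_{n}$ is attempted. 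Until (ii) is carried out for at least one nontrivial range (say $2p\leq n\leq 3p-1$, where the paper's explicit basis for $End(X^{\otimes 2p})$ would be the natural benchmark), the proposal remains a research plan rather than a proof, and the statement remains open.
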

The reasoning for this conjecture comes from considering diagrammatic descriptions of morphisms between modules, which can be considered as the reasoning behind many relations. Hence knowing the diagrammatic descriptions of all such morphisms suggests that the possibility of there being any unknown relations is less likely. These diagrammatic descriptions will be detailed in \cite{Me2}, and are also described in \cite{MeThesis}.\\\

The paper is outlined as follows: In Section $2$, we review the definition of $\bar{U}_{q}(\mathfrak{sl}_{2})$, along with the modules, fusion rules, and module homomorphisms relevant to our construction, as well as giving details on the dimensions of the planar algebra. In Section $3$, we introduce the planar algebra construction, and define the generators $\alpha$ and $\beta$, along with a number of relations on them. The proofs of the $\bar{U}_{q}(\mathfrak{sl}_{2})$ relations are given in Section $4$. A basis for $End_{\bar{U}_{q}(\mathfrak{sl}_{2})}(X^{\otimes 2p})$ is also given at the end of Section $4$. We record a number of combinatorial identities used in the proofs in the Appendix. For more detailed proofs, see \cite{MeThesis}.

\section{$\bar{U}_{q}(\mathfrak{sl}_{2})$ \label{Uq}}
For $q=e^{i\pi/p}$, $p\geq 2$, and $p\in \mathbb{N}$, the restricted quantum group $\bar{U}_{q}(\mathfrak{sl}_{2})$ over a field $\mathbb{K}$ is the Hopf algebra generated by $E,F,K$ subject to the relations:
\begin{align*}
KEK^{-1}&=q^{2}E& KFK^{-1}&=q^{-2}F& EF-FE&=\frac{K-K^{-1}}{q-q^{-1}}\\
E^{p}&=0& F^{p}&=0& K^{2p}&=1
\end{align*}
and coproduct $\Delta$, counit $\epsilon$, and antipode $S$:
\begin{align*}
\Delta:E&\mapsto E\otimes K+1\otimes E& F&\mapsto F\otimes 1+K^{-1}\otimes F& K&\mapsto K\otimes K\\
\epsilon:E&\mapsto 0& F&\mapsto 0& K&\mapsto 1\\
S:E&\mapsto-EK^{-1}& F&\mapsto-KF& K&\mapsto K^{-1}
\end{align*}
The modules for $\bar{U}_{q}(\mathfrak{sl}_{2})$ were given in \cite{Arike,GST,KoSa,Suter,Xiao}, and the modules relevant to our construction consist of the following:
$2p-2$ simple modules, $\mathcal{X}^{\pm}_{s}$, $1\leq s<p$, two simple projective modules $\mathcal{X}^{\pm}_{p}$ and $2p-2$ non-simple indecomposable projective modules $\mathcal{P}^{\pm}_{s}$. For the simple modules $\mathcal{X}^{\pm}_{s}$, $1\leq s\leq p$, they can be given in terms of a basis as $\{\nu_{n}^{s}\}_{n=0,...,s-1}$ with the action of $\bar{U}_{q}(\mathfrak{sl}_{2})$ given by:
\begin{align*}
K\nu_{n}&=\pm q^{s-1-2n}\nu_{n}\\
E\nu_{n}&=\pm[n][s-n]\nu_{n-1}\\
F\nu_{n}&=\nu_{n+1}
\end{align*}
where $\nu_{-1}=\nu_{s}=0$ and $[n]=\frac{q^{n}-q^{-n}}{q-q^{-1}}=q^{n-1}+q^{n-3}+...+q^{3-n}+q^{1-n}$. The projective modules $\mathcal{P}^{\pm}_{s}$, $1\leq s<p$, for a given choice of $p$, can be given in terms of the basis $\{a_{i}^{s,p},b_{i}^{s,p}\}_{0\leq i \leq s-1}\cup\{x_{j}^{s,p},y_{j}^{s,p}\}_{0\leq j\leq p-s-1}$. The action of $\bar{U}_{q}(\mathfrak{sl}_{2})$ is given by:
\begin{align*}
Ka_{i}&=\pm q^{s-1-2i}a_{i}& Kb_{i}&=\pm q^{s-1-2i}b_{i}\\
Kx_{j}&=\mp q^{p-s-1-2j}x_{j}& Ky_{j}&=\mp q^{p-s-1-2j}y_{j}\\
Ea_{i}&=\pm[i][s-i]a_{i-1}& Eb_{i}&=\pm[i][s-i]b_{i-1}+a_{i-1}& Eb_{0}&=x_{p-s-1}\\
Ex_{j}&=\mp[j][p-s-j]x_{j-1}& Ey_{j}&=\mp[j][p-s-j]y_{j-1}& Ey_{0}&=a_{s-1}\\
Fa_{i}&=a_{i+1}& Fb_{i}&=b_{i+1}& Fb_{s-1}&=y_{0}\\
Fx_{j}&=x_{j+1}& Fx_{p-s-1}&=a_{0}& Fy_{j}&=y_{j+1}
\end{align*}
where $x_{-1}=a_{-1}=a_{s}=y_{p-s}=0$, \cite{FGST2}.\\
\\
The maps between indecomposable modules can be summarized as follows:
\begin{itemize}
	\item $\dim\left( Hom(\mathcal{X}^{\pm}_{s},\mathcal{X}^{\pm}_{t})\right) =0$ for $s\neq t$ or $1$ for $s=t$, for $1\leq s,t\leq p$.
	\item $ Hom(\mathcal{X}^{\pm}_{s},\mathcal{X}^{\mp}_{t}) =0$ for $1\leq s,t\leq p$.
	\item $ \dim\left( Hom(\mathcal{P}^{\pm}_{s},\mathcal{X}^{\pm}_{t})\right) =0$ for $s\neq t$ or $1$ for $s=t$, for $1\leq s,t \leq p-1$.
	\item $ Hom(\mathcal{P}^{\pm}_{s},\mathcal{X}^{\mp}_{t}) =0$ for $1\leq s,t\leq p-1$.
	\item $\dim\left( Hom(\mathcal{P}^{\pm}_{s},\mathcal{P}^{\pm}_{t})\right) =0$ for $s\neq t$ or $2$ for $s=t$, $1\leq s,t\leq p-1$.
	\item $\dim\left( Hom(\mathcal{P}^{\pm}_{s},\mathcal{P}^{\mp}_{t})\right) =0$ for $s\neq p-t$ or $2$ for $s=p-t$, for $1\leq s,t\leq p-1$.
\end{itemize}
The non-zero, and non-identity parts of these maps can be given in terms of bases by:
\begin{align*}
\mathcal{P}^{\pm}_{s}\rightarrow\mathcal{X}^{\pm}_{s}:& b_{i}\mapsto \nu_{i}\\
\mathcal{X}^{\pm}_{s}\rightarrow\mathcal{P}^{\pm}_{s}:& \nu_{i}\mapsto a_{i}\\
\mathcal{P}^{\pm}_{s}\rightarrow\mathcal{P}^{\pm}_{s}:& b_{i}\mapsto a_{i}\\
\mathcal{P}^{\pm}_{s}\rightarrow\mathcal{P}^{\mp}_{s}:& b_{i}\mapsto f_{1}\tilde{x}_{i}+f_{2}\tilde{y}_{i}\\
& x_{j}\mapsto f_{2}\tilde{a}_{j}\\
& y_{j}\mapsto f_{1}\tilde{a}_{j}
\end{align*}
where $f_{1},f_{2}\in\mathbb{K}$, and we denote the elements of $\mathcal{P}^{\mp}_{s}$ with $\sim$.\\
\\
From now on, we denote the module $\mathcal{X}^{+}_{2}$ by $X$. The fusion rules for $\bar{U}_{q}(\mathfrak{sl}_{2})$ modules were given in \cite{KoSa,TsuWo}. The fusion rules relevant to our planar algebra construction are:
\begin{align*}
&\mathcal{X}^{i}_{1}\otimes\mathcal{X}^{j}_{s}\simeq\mathcal{X}^{j}_{s}\otimes\mathcal{X}^{i}_{1}\simeq\mathcal{X}^{ij}_{s}, \:\:\: i,j\in\{+,-\}\\
&\mathcal{X}^{i}_{1}\otimes\mathcal{P}^{j}_{s}\simeq\mathcal{P}^{j}_{s}\otimes\mathcal{X}^{i}_{1}\simeq\mathcal{P}^{ij}_{s}\\
& X\otimes\mathcal{X}^{\pm}_{t}\simeq\mathcal{X}^{\pm}_{t}\otimes X\simeq\mathcal{X}^{\pm}_{t-1}\oplus\mathcal{X}^{\pm}_{t+1}, \:\:\: 2\leq t\leq p-1\\
& X\otimes\mathcal{X}^{\pm}_{p}\simeq \mathcal{X}^{\pm}_{p}\otimes X\simeq \mathcal{P}^{\pm}_{p-1}\\
& X\otimes\mathcal{P}^{\pm}_{p-1}\simeq \mathcal{P}^{\pm}_{p-1}\otimes X\simeq \mathcal{P}^{\pm}_{p-2}\oplus 2\mathcal{X}^{\pm}_{p}\\
& X\otimes\mathcal{P}^{\pm}_{u}\simeq\mathcal{P}^{\pm}_{u}\otimes X\simeq\mathcal{P}^{\pm}_{u-1}\oplus\mathcal{P}^{\pm}_{u+1}, \:\:\: 2\leq u\leq p-2\\
& X\otimes\mathcal{P}^{\pm}_{1}\simeq\mathcal{P}^{\pm}_{1}\otimes X\simeq\mathcal{P}^{\pm}_{2}\oplus 2\mathcal{X}^{\mp}_{p}
\end{align*}
From this, it follows that $\mathcal{X}^{+}_{s}$ first appears in the decomposition of $X^{\otimes (s-1)}$, $\mathcal{P}^{+}_{t}$ first appears in the decomposition of $X^{\otimes(2p-t-1)}$, $\mathcal{X}^{-}_{p}$ in $X^{\otimes(2p-1)}$, and $\mathcal{P}^{-}_{u}$ in $X^{\otimes(3p-u-1)}$. $\mathcal{X}^{-}_{v}$, $1\leq v\leq p-1$, does not appear at all in the decomposition of $X^{\otimes n}$.\\
\\
Using these fusion rules, along with the maps between modules, we can describe the dimensions of $End(X^{\otimes n})$, which we denote by $D_{n}$. We denote by $M_{i,n}$ the multiplicity of $\mathcal{X}^{+}_{i}$ in the decomposition of $X^{\otimes n}$, $1\leq i\leq p$. $M_{2p-j,n}$ denotes the multiplicity of $\mathcal{P}^{+}_{j}$, $1\leq j\leq p-1$, $M_{2p,n}$ the multiplicity of $\mathcal{X}^{-}_{p}$, and $M_{3p-j,n}$ the multiplicity of $\mathcal{P}^{-}_{j}$. For a given choice of $p$, we then have:
\begin{align*}
D_{n}&=M_{2p,n}^{2}+\sum\limits_{i=1}^{p}M_{i,n}^{2}+\sum\limits_{j=1}^{p-1}(2M_{p+j,n}^{2}+2M_{2p+j,n}^{2}+2M_{p-j,n}M_{p+j,n}+4M_{p+j,n}M_{3p-j,n})
\end{align*}
By use of the fusion rules to give module multiplicities in the decomposition of $X^{\otimes n}$, the following was proven in section 4.5 of \cite{GST}:
\begin{prop}\label{prop1}
For $n<2p-1$, $D_{n}$ is equal to the Catalan number, $C_{n}:=\frac{1}{(n+1)}\left(\begin{array}{c}2n\\n\end{array}\right)$, and $D_{2p-1}=C_{2p-1}+3$.
\end{prop}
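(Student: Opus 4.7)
The plan is to compute the multiplicities $M_{\cdot, n}$ by induction on $n$ using the fusion rules, and then substitute them into the formula for $D_{n}$.

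For $n \leq p - 1$: the simple projective $\mathcal{X}^{+}_{p}$ first appears at $n = p - 1$, so for $n < p - 1$ no projective module $\mathcal{P}^{+}_{p-j}$ can appear in $X^{\otimes n}$ either, since every fusion rule producing a projective involves $\mathcal{X}^{+}_{p}$ or a pre-existing $\mathcal{P}^{+}$. In this range the recurrence $M_{i, n+1} = M_{i-1, n} + M_{i+1, n}$ coincides with the semisimple $U_{q}(\mathfrak{sl}_{2})$ pattern, and $D_{n} = \sum_{i=1}^{p} M_{i, n}^{2}$ equals the Catalan number $C_{n}$ by the classical identity $\dim TL_{n}(q+q^{-1}) = C_{n}$.

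For $p \leq n < 2p - 1$: projective modules $\mathcal{P}^{+}_{p-j}$ start appearing, but $\mathcal{X}^{-}_{p}$ and the negative projectives $\mathcal{P}^{-}_{j}$ do not (these require $n \geq 2p - 1$ and $n \geq 2p$ respectively). From the fusion rules one derives the coupled recurrences for $\{M_{i, n}\}_{i=1}^{p}$ and $\{M_{p+j, n}\}_{j=1}^{p-1}$, whose non-standard boundary cases are
\[
M_{p-1, n+1} = M_{p-2, n}, \qquad M_{p, n+1} = M_{p-1, n} + 2 M_{p+1, n}, \qquad M_{2p-1, n+1} = M_{2p-2, n},
\]
with the usual two-sided recurrence at interior indices. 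Substitute these into the reduced expression
\[
D_{n} = \sum_{i=1}^{p} M_{i, n}^{2} + 2 \sum_{j=1}^{p-1} \bigl( M_{p+j, n}^{2} + M_{p-j, n} M_{p+j, n} \bigr),
\]
expand $D_{n+1}$ as a quadratic form in the level-$n$ multiplicities, and verify by induction that $D_{n+1} = C_{n+1}$. The key algebraic observation is that the coefficient $2$ accompanying the projective and cross terms is exactly what is needed to absorb the doubled summand $2\mathcal{X}^{+}_{p}$ in $X \otimes \mathcal{P}^{+}_{p-1}$, while the defective recurrence $M_{p-1, n+1} = M_{p-2, n}$—missing the would-be $+M_{p, n}$ contribution—cancels the cross terms that would otherwise appear. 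Verifying this cancellation directly, and hence reducing the quadratic form to a Catalan recurrence, is the main technical obstacle.

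For $n = 2p - 1$: the module $\mathcal{X}^{-}_{p}$ appears for the first time through the boundary fusion $X \otimes \mathcal{P}^{+}_{1} \simeq \mathcal{P}^{+}_{2} \oplus 2 \mathcal{X}^{-}_{p}$, applied to the $\mathcal{P}^{+}_{1}$-summand of $X^{\otimes (2p-2)}$, which by the previous step has multiplicity $1$; hence $M_{2p, 2p-1} = 2$. Plugging the updated multiplicities into the full formula for $D_{2p-1}$—including the new contribution $M_{2p, 2p-1}^{2} = 4$ together with the shifts it induces in the other projective and simple terms relative to the $n < 2p-1$ pattern—one computes a net excess over $C_{2p-1}$ of exactly $3$.
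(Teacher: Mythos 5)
Your setup is correct and is the same route the paper intends (the paper itself does not reprove this statement; it cites the fusion-rule multiplicity computation of \cite{GST}, Section 4.5): the boundary recurrences $M_{p-1,n+1}=M_{p-2,n}$, $M_{p,n+1}=M_{p-1,n}+2M_{p+1,n}$, $M_{2p-1,n+1}=M_{2p-2,n}$ and the reduced quadratic form for $D_n$ are all right, as is $M_{2p,2p-1}=2$. But the proof stops exactly where it has to happen: you label the cancellation ``the main technical obstacle'' and assert, without verification, that the quadratic form reduces to $C_{n+1}$ and that the excess at $n=2p-1$ is $3$. Worse, the induction as you frame it cannot close: $D_{n+1}$ is a quadratic form in the individual level-$n$ multiplicities, so knowing only the scalar value $D_n=C_n$ determines nothing about $D_{n+1}$. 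The inductive hypothesis must be a closed formula for the multiplicities themselves, not for $D_n$.

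The missing ingredient is an identification of the multiplicities with the generic ballot numbers $c_{m,n}$ (defined by $c_{m,n+1}=c_{m-1,n}+c_{m+1,n}$, $c_{0,0}=1$, with $\sum_m c_{m,n}^2=C_n$). One checks from your recurrences, by induction on $n$, that for all $n\leq 2p-1$
\[
M_{i,n}=c_{i-1,n}-c_{2p-i-1,n}\ (1\leq i\leq p-1),\qquad M_{p,n}=c_{p-1,n},\qquad M_{p+j,n}=c_{p+j-1,n}\ (1\leq j\leq p-1),
\]
the defective boundary recurrences for $M_{1}$ and $M_{2p-1}$ being consistent with the generic ones precisely because $c_{2p-1,n}=0$ for $n\leq 2p-2$. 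Substituting into your reduced expression, the term $2M_{p+j,n}^{2}+2M_{p-j,n}M_{p+j,n}$ collapses to $2c_{p-j-1,n}c_{p+j-1,n}$, which exactly restores the cross terms lost in expanding $M_{p-j,n}^{2}=(c_{p-j-1,n}-c_{p+j-1,n})^{2}$, giving $D_n=\sum_{m=0}^{2p-2}c_{m,n}^{2}=C_n$ for $n\leq 2p-2$. At $n=2p-1$ the same formulas persist, there are no ``shifts'' in the other terms; the only changes are the new summand $M_{2p,2p-1}^{2}=4$ and the fact that $C_{2p-1}$ now exceeds $\sum_{m\leq 2p-2}c_{m,2p-1}^{2}$ by $c_{2p-1,2p-1}^{2}=1$, whence $D_{2p-1}=C_{2p-1}-1+4=C_{2p-1}+3$. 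Without some such explicit identification your argument is a plan rather than a proof.
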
 
Another proof is contained in Section 4.3 of \cite{MeThesis}. In general, the dimensions $D_{n}$ can be calculated from the $\bar{U}_{q}(\mathfrak{sl}_{2})$ fusion rules and homomorphisms, or by use of results in Section 4.9 of \cite{GST}. Alternatively, we have the following:\\

Let $\left\{\begin{array}{c}a\\b\end{array}\right\}:=\left(\begin{array}{c}a\\b\end{array}\right)-\left(\begin{array}{c}a\\b-1\end{array}\right)$, and\\ $G_{n,i}:=\sum\limits_{j=0}^{\lfloor\frac{i}{2}\rfloor}2\left\{\begin{array}{c}n\\i+1-j\end{array}\right\}\left\{\begin{array}{c}n\\j\end{array}\right\}+\left(\lfloor\frac{(i+1)}{2}\rfloor-\lfloor\frac{i}{2}\rfloor\right)\left\{\begin{array}{c}n\\\lfloor\frac{i}{2}\rfloor+1\end{array}\right\}^{2}$	
\begin{conj}
\begin{align*}
D_{n}=C_{n}+\sum\limits_{j=0}^{\lfloor\frac{n}{p}\rfloor}(n+1)(n+3)G_{n,n-(j+2)p}, \:\text{ for all $n$.}
\end{align*}
\end{conj}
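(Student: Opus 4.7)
The plan is to compute the multiplicities $M_{i,n}$ in closed form via a reflection principle applied to the fusion rules, substitute the resulting expressions into the master formula for $D_n$, and show that the alternating cross terms collapse to the stated correction.

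First I would establish closed-form expressions for the $M_{i,n}$. In the semisimple situation the multiplicity of the $s$-dimensional simple in $X^{\otimes n}$ is the ballot number $\left\{\begin{array}{c}n\\(n-s+1)/2\end{array}\right\}$, and the classical identity $\sum_s \{n,(n-s+1)/2\}^2 = C_n$ recovers the Catalan number. For the restricted quantum group, the recursion $M_{s,n} = M_{s-1,n-1} + M_{s+1,n-1}$ suggested by $X \otimes \mathcal{X}^{\pm}_t = \mathcal{X}^{\pm}_{t-1} \oplus \mathcal{X}^{\pm}_{t+1}$ is modified at the wall $s = p$ by the mixing rules $X \otimes \mathcal{X}^{\pm}_p = \mathcal{P}^{\pm}_{p-1}$ and $X \otimes \mathcal{P}^{\pm}_{p-1} = \mathcal{P}^{\pm}_{p-2} \oplus 2\mathcal{X}^{\pm}_p$, and analogously at $s = 2p$ by $X \otimes \mathcal{P}^{\pm}_1 = \mathcal{P}^{\pm}_2 \oplus 2\mathcal{X}^{\mp}_p$. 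Inducting on $n$, I would verify that each $M_{i,n}$ is an alternating sum of ballot numbers $\left\{\begin{array}{c}n\\k\end{array}\right\}$ whose indices $k$ run over the orbit of $(n-i+1)/2$ under the translations by $p$ and reflections across integer multiples of $p$ generated by the fusion walls, with signs recording the parity of reflections.

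Next, substitute these expressions into
\[
D_{n} = M_{2p,n}^{2} + \sum_{i=1}^{p} M_{i,n}^{2} + \sum_{j=1}^{p-1}\bigl( 2M_{p+j,n}^{2} + 2M_{2p+j,n}^{2} + 2M_{p-j,n}M_{p+j,n} + 4M_{p+j,n}M_{3p-j,n}\bigr)
\]
and expand. The diagonal contributions from the primary (un-reflected) ballot numbers assemble into $C_n$, exactly as in the semisimple case. The remaining contributions are cross products $\left\{\begin{array}{c}n\\a\end{array}\right\}\left\{\begin{array}{c}n\\b\end{array}\right\}$ in which $a+b$ is a linear function of $p$ and a summation parameter, and the mixed coefficients $2$ and $4$ appearing in front of $M_{p-j}M_{p+j}$ and $M_{p+j}M_{3p-j}$ are precisely what is needed for the signs from the reflection formulas to align. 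Regrouping the result by the number of reflections should produce one copy of $(n+1)(n+3)\,G_{n,n-(j+2)p}$ for each $j$, the inner sum in $G_{n,i}$ reflecting the possible pairings of the two ballot-number factors and the distinguished term with coefficient $\lfloor(i+1)/2\rfloor-\lfloor i/2\rfloor$ accounting for the self-paired case when $i$ is even; the factor $(n+1)(n+3)$ should emerge from a Vandermonde-type identity on sums of pairs of ballot numbers.

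The principal obstacle is the combinatorial consolidation of this alternating double sum. The ballot-number identities recorded in the Appendix will be needed, and may have to be supplemented; the cleanest route is likely either an induction on $\lfloor n/p \rfloor$ that turns on one further reflection at a time, or a generating-function computation of $\sum_n D_n z^n$ in which the reflection group acts as an explicit automorphism. Before attempting the general argument, I would sanity-check the formula against Proposition \ref{prop1} (where only the $j = 0$ boundary terms should contribute non-trivially) and against the $p = 2$ case, in which all multiplicities can be computed by hand and the correction sum has few terms, so as to pin down the correct normalization and index conventions.
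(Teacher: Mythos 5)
The statement you are attempting to prove is stated in the paper as a \emph{conjecture}: the paper gives no proof of it at all, only the remark that it can be checked numerically for small $n$ against the $p=2$ formula of Gainutdinov--Saleur--Tipunin. So there is no proof in the paper to compare yours against, and what you have written is not a proof either --- it is a plan whose two essential steps are both left open. First, the closed form for the multiplicities $M_{i,n}$ as signed sums of ballot numbers over a reflection orbit is asserted, not established. The fusion graph here is not a plain reflected walk: the rules $X\otimes\mathcal{P}^{\pm}_{p-1}\simeq\mathcal{P}^{\pm}_{p-2}\oplus 2\mathcal{X}^{\pm}_{p}$ and $X\otimes\mathcal{P}^{\pm}_{1}\simeq\mathcal{P}^{\pm}_{2}\oplus 2\mathcal{X}^{\mp}_{p}$ introduce edges of multiplicity two, so the recursion for $M_{p,n}$, $M_{2p,n}$ and their neighbours is not the recursion $M_{s,n}=M_{s-1,n-1}+M_{s+1,n-1}$ that the classical reflection principle solves, and whether the solution is still an alternating sum of ballot numbers with signs given by reflection parity is exactly what would need to be proved.

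Second, and more seriously, the consolidation of the resulting double sum into $\sum_{j}(n+1)(n+3)G_{n,n-(j+2)p}$ --- in particular the emergence of the prefactor $(n+1)(n+3)$ and of the distinguished self-paired term with coefficient $\lfloor(i+1)/2\rfloor-\lfloor i/2\rfloor$ --- is the entire content of the conjecture, and you explicitly defer it as ``the principal obstacle,'' offering only the hope of a Vandermonde-type identity. Until that identity is exhibited and verified, nothing has been proved. Your proposed sanity checks (agreement with Proposition 2.1 for $n\leq 2p-1$, and the $p=2$ case) are worthwhile and are essentially all the evidence the paper itself offers, but they do not close the gap. If you want to pursue this, the most promising concrete step is to first prove the multiplicity formula rigorously for the doubled-edge walk (an induction on $n$ tracking the eight families $\mathcal{X}^{\pm}_{s}$, $\mathcal{P}^{\pm}_{s}$ separately), and only then attack the quadratic identity; a generating-function computation of $\sum_{n}D_{n}z^{n}$ may indeed be the cleaner route for the second step.
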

It can be verified numerically for small values of $n$ that this agrees with the formula for $p=2$ given in Section 2.4.3 of \cite{GST}.\\ \\
The module $X:=\mathcal{X}^{+}_{2}$ has basis $\{\nu_{0},\nu_{1}\}$, with $\bar{U}_{q}(\mathfrak{sl}_{2})$ action:
\begin{align*}
K(\nu_{0})&=q\nu_{0} & E(\nu_{0})&=0 & F(\nu_{0})&=\nu_{1}\\
K(\nu_{1})&=q^{-1}\nu_{1} & E(\nu_{1})&=\nu_{0} & F(\nu_{1})&=0
\end{align*}
The action of $\bar{U}_{q}(\mathfrak{sl}_{2})$ on $X^{\otimes n}$ is given by use of the coproduct.\\
\\
We denote by $\rho_{i_{1},...,i_{n},z}$ the element of $X^{\otimes z}$ with $\nu_{1}$ at positions $i_{1},...,i_{n}$, and $\nu_{0}$ elsewhere. We also occasionally omit the $\otimes$ sign, and combine indices. For example, $\rho_{1,3,5}=\nu_{1}\otimes\nu_{0}\otimes\nu_{1}\otimes\nu_{0}\otimes\nu_{0}=\nu_{10100}$. The elements of $X^{\otimes z}$ can be described in terms of the $K$-action on them. For $x\in X^{\otimes z}$, with $K(x)=\lambda x$, $\lambda\in\mathbb{K}$, we call $\lambda$ the weight of $x$. Alternatively for basis elements we can write this as $K(\rho_{i_{1},...,i_{n},z})=q^{z-2n}x$, and refer to $n$ also as the weight. $X^{\otimes z}$ will then have the set of weights $\{q^{z},q^{z-2},...,q^{2-z},q^{-z}\}$. Denoting the set of elements of $X^{\otimes z}$ with weight $q^{z-2n}$ by $X_{n,z}$, we have $X^{\otimes z}=\bigcup\limits_{i=0}^{z}X_{i,z}$. The weight spaces $X_{0,z}$ $X_{z,z}$ both have a single element, which we denote by $x_{0,z}:=(\nu_{0})^{\otimes z}$, $x_{z,z}:=(\nu_{1})^{\otimes z}$ respectively, and occasionally drop the second index if the context is clear. We have $\rho_{i_{1},...,i_{n},z}\in X_{n,z}$. We record a number of combinatorial relations involving $\bar{U}_{q}(\mathfrak{sl}_{2})$ and its action on $X^{\otimes z}$ in the Appendix.

\section{The $\bar{U}_{q}(\mathfrak{sl}_{2})$ Planar Algebra}
For detailed introductions to planar algebras, see \cite{JonesP1, MPS}. Our construction of the $\bar{U}_{q}(\mathfrak{sl}_{2})$ planar algebra is a diagrammatic description of $End\big(X^{\otimes n}\big)$, or more generally, $Hom(X^{\otimes n},X^{\otimes m})$, similar to the constructions of \cite{EvansPugh2,FrKhov,Kuper,Morrison}. It was shown in \cite{GST} that for $n<2p-1$, this is isomorphic to the Temperley-Lieb algebra on $n$ points with parameter $\delta=q+q^{-1}$, and for $n\geq 2p-1$,  $End\big(X^{\otimes n}\big)$ contains extra generators, $\alpha_{i},\beta_{i}$, $1\leq i\leq n-2p+1$, which are described in Section \ref{gen}. We then prove a number of relations on these generators.\\

Diagrammatically, $Hom(X^{\otimes n},X^{\otimes m})$ is represented by a box with $n$ points along the top, and $m$ points along the bottom, where zero points represent a map to/from $\mathcal{X}^{+}_{1}\simeq\mathbb{K}$. Each point on the box is connected to a string, up to isotopy, with strings not allowed to intersect. Removing a closed loop from a diagram corresponds to multiplying by $\delta\in\mathbb{K}$. Multiplication of maps is given by adjoining a diagram below another diagram, and smoothing strings. Tensor products are given by by adjoining diagrams side by side. The identity element in $End(X^{\otimes n})$ is given by $n$ vertical strings. The identity map $\mathbb{K}\rightarrow\mathbb{K}$ is given by an empty box. We often use a single thick string to represent multiple parallel strings.\\
\\
The \textit{Temperley-Lieb Algebra}, $TL_{n}(\delta)$, is the algebra generated by $\{1,\mathbf{e}_{1},...,\mathbf{e}_{n-1}\}$, $\delta\in\mathbb{K}$, with relations:
\begin{align*}
\mathbf{e}_{i}^{2}=&\delta \mathbf{e}_{i}\\
\mathbf{e}_{i}\mathbf{e}_{i\pm 1}\mathbf{e}_{i}=& \mathbf{e}_{i}\\
\mathbf{e}_{i}\mathbf{e}_{j}=& \mathbf{e}_{j}\mathbf{e}_{i}, \:\:\: \lvert i-j\rvert >1
\end{align*}
The \textit{Jones-Wenzl Projections}, \cite{Wenzl}, in the Temperley-Lieb algebra are defined inductively by:
\begin{align*}
\mathfrak{f}_{1}:=& 1\\
\mathfrak{f}_{n+1}:=& \mathfrak{f}_{n}\otimes 1 -\frac{[n]}{[n+1]}\mathfrak{f}_{n}\mathbf{e}_{n}\mathfrak{f}_{n}
\end{align*}
The following Proposition comes from Section 4.4 of \cite{GST}:
\begin{prop}
$End_{\bar{U}_{q}(\mathfrak{sl}_{2})}(X^{\otimes n})\simeq TL_{n}(q+q^{-1})$, for $n<2p-1$.
\end{prop}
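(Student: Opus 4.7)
The plan is to exhibit an explicit algebra homomorphism $\phi: TL_n(q+q^{-1}) \to End_{\bar{U}_q(\mathfrak{sl}_2)}(X^{\otimes n})$ and show it is bijective by matching dimensions and proving surjectivity.

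First I would construct $\bar{U}_q$-linear maps $u: \mathbb{K} \to X \otimes X$ (cup) and $d: X \otimes X \to \mathbb{K}$ (cap). These exist and are unique up to scalar because the trivial module $\mathcal{X}^+_1 \simeq \mathbb{K}$ appears with multiplicity one in $X \otimes X$: as a direct summand via $X \otimes X \simeq \mathcal{X}^+_1 \oplus \mathcal{X}^+_3$ when $p \geq 3$, and in the socle (and head) of $\mathcal{P}^+_1 \simeq X \otimes X$ when $p = 2$. Explicitly one can take $u(1) = \nu_0 \otimes \nu_1 - q^{-1}\nu_1 \otimes \nu_0$ together with the natural pairing for $d$, scaled so that the loop value is $d \circ u = q + q^{-1} = \delta$; verifying $\bar{U}_q$-equivariance is a short check with the coproduct formulae from Section \ref{Uq}. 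I then set $\phi(\mathbf{e}_i) := \mathrm{id}^{\otimes(i-1)} \otimes (u \circ d) \otimes \mathrm{id}^{\otimes(n-i-1)}$ and extend multiplicatively.

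Next, I would verify the three defining Temperley-Lieb relations. The loop relation $\mathbf{e}_i^2 = \delta \mathbf{e}_i$ reduces to $d \circ u = \delta$, which holds by normalization. The relation $\mathbf{e}_i \mathbf{e}_{i \pm 1}\mathbf{e}_i = \mathbf{e}_i$ reduces to the zig-zag identities $(d \otimes \mathrm{id}) \circ (\mathrm{id} \otimes u) = \mathrm{id}_X = (\mathrm{id} \otimes d) \circ (u \otimes \mathrm{id})$, a direct check on the basis $\{\nu_0, \nu_1\}$ of $X$. Distant commutation $\mathbf{e}_i\mathbf{e}_j = \mathbf{e}_j\mathbf{e}_i$ for $|i-j|>1$ is automatic from bifunctoriality of $\otimes$. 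This makes $\phi$ a well-defined algebra homomorphism. For the comparison, non-crossing pairings form a $\mathbb{K}$-basis of $TL_n(\delta)$ so $\dim TL_n = C_n$, while Proposition \ref{prop1} gives $\dim End_{\bar{U}_q}(X^{\otimes n}) = C_n$ for $n < 2p-1$. It therefore suffices to show $\phi$ is surjective, which I would argue by induction on $n$ using the splitting $X^{\otimes n} \simeq X^{\otimes(n-1)} \otimes X$: the adjunction data $(u,d)$ allow every $\bar{U}_q$-intertwiner of $X^{\otimes n}$ to be reassembled from intertwiners of $X^{\otimes(n-1)}$, handled by induction, together with cup/cap operations on the final two strands, which lie in the image of $\phi(\mathbf{e}_{n-1})$.

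The main obstacle is surjectivity in the non-semisimple range $p \leq n < 2p-1$, where $X^{\otimes n}$ already contains indecomposable projective summands $\mathcal{P}^+_j$. These carry nilpotent intertwiners (the two-dimensional $Hom(\mathcal{P}^+_j, \mathcal{P}^+_j)$ contains a non-zero radical element given in Section \ref{Uq} by $b_i \mapsto a_i$), and one must confirm that these nilpotents really are cap-cup compositions in the image of $\phi$. The verification proceeds by direct computation on the explicit basis $\{a_i,b_i,x_j,y_j\}$ of $\mathcal{P}^+_j$, identifying each radical generator with a specific TL diagram. The same analysis breaks precisely at $n = 2p-1$, where the new generators $\alpha_i,\beta_i$ first appear, which is exactly why the statement is restricted to $n<2p-1$.
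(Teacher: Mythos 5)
Your overall strategy is the same as the paper's: realize the Temperley--Lieb generators explicitly via a cup $u$ and cap $d$ through the trivial summand of $X\otimes X$, check the $TL$ relations, and then invoke the dimension count $D_{n}=C_{n}=\dim TL_{n}(q+q^{-1})$ from Proposition \ref{prop1}. The genuine gap is in the step you choose to close the argument. Once the dimensions match, injectivity and surjectivity of $\phi$ are equivalent, and you opt to prove surjectivity by an induction in which every intertwiner of $X^{\otimes n}$ is ``reassembled'' from intertwiners of $X^{\otimes(n-1)}$ together with cups and caps on the last two strands. That argument is not valid here: its failure is precisely the subject of the paper, since at $n=2p-1$ exactly this reassembly breaks down and the new generators $\alpha,\beta$ appear, and nothing in your induction distinguishes $n<2p-1$ from $n=2p-1$. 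Your final paragraph concedes the obstacle (the radical endomorphisms $b_{i}\mapsto a_{i}$ of the projective summands $\mathcal{P}^{+}_{j}$, which occur already for $n\geq p$) but does not resolve it; moreover, resolving it summand-by-summand would not suffice, because surjectivity also requires hitting the full multiplicity spaces, i.e.\ the morphisms between \emph{distinct} copies of the same indecomposable summand. The repair is to argue in the other direction: prove that $\phi:TL_{n}(q+q^{-1})\rightarrow End(X^{\otimes n})$ is injective (the faithfulness of the tensor-space representation of $TL_{n}$), which together with $\dim TL_{n}=C_{n}=D_{n}$ gives the isomorphism. This is what the paper does.

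Two smaller points. First, your normalization cannot be achieved as stated: rescaling $u$ or $d$ scales the loop value and the zig-zag maps by the same factor, and with loop value $q+q^{-1}$ both zig-zags come out to $-\mathrm{id}_{X}$, not $+\mathrm{id}_{X}$ (this is why the paper remarks that moving a cup or cap rightwards should carry a sign). This is harmless for the present proposition, since the two signs cancel in $\mathbf{e}_{i}\mathbf{e}_{i\pm1}\mathbf{e}_{i}=\mathbf{e}_{i}$, but the claim as written is false. Second, your uniqueness-of-$u,d$ argument via the decomposition of $X\otimes X$ is correct and is a useful addition not spelled out in the paper, as is the explicit verification of the $TL$ relations, which the paper leaves implicit.
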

\begin{proof}
The generators $\mathbf{e}_{i}$ correspond to a map $e:X\otimes X\rightarrow \mathbb{K}\rightarrow X\otimes X$, $e\in End(X^{\otimes n})$, acting on the $i$th and $(i+1)$th positions in $X^{\otimes n}$. Explicitly, this is given by:
\begin{align*}
e:&\nu_{0}\otimes\nu_{0}\mapsto 0\\
e:&\nu_{1}\otimes\nu_{1}\mapsto 0\\
e:&\nu_{0}\otimes\nu_{1}\mapsto q\nu_{0}\otimes\nu_{1}-\nu_{1}\otimes\nu_{0}\\
e:&\nu_{1}\otimes\nu_{0}\mapsto q^{-1}\nu_{1}\otimes\nu_{0}-\nu_{0}\otimes\nu_{1}\\
\mathbf{e}_{i}\simeq& 1^{\otimes (i-1)}\otimes e\otimes 1^{\otimes(n-i-1)}
\end{align*}
This correspondence with the generators $\mathbf{e}_{i}$ clearly extends to all elements of $TL_{n}$, and so gives an injective map from $End(X^{\otimes n})$ to $TL_{n}$. (To see the injectivity, consider, for example, $\mathbf{e}_{1}\mathbf{e}_{2}$ and $\mathbf{e}_{2}\mathbf{e}_{1}$ acting on $\nu_{011}$). From Proposition \ref{prop1}, we have $\dim\left( End(X^{\otimes n})\right)=\dim\left( TL_{n}\right)$. Hence\\ $End_{\bar{U}_{q}(\mathfrak{sl}_{2})}(X^{\otimes n})\simeq TL_{n}(q+q^{-1})$.
\end{proof}
The generator $e$ can be split into two maps; $\cup:X\otimes X\rightarrow\mathcal{X}^{+}_{1}$ and $\cap:\mathcal{X}^{+}_{1}\rightarrow X\otimes X$. In terms of $\nu_{ij}$, they are given as:
\begin{align*}
\cup(\nu_{00})=&\cup(\nu_{11})=0\\
\cup(\nu_{01})=&-q\nu\\
\cup(\nu_{10})=& \nu\\
\cap(\nu)=& q^{-1}\nu_{10}-\nu_{01}
\end{align*}
where $\mathcal{X}^{+}_{1}$ has basis $\{\nu\}$. These extend to maps $\cup_{i}$, $\cap_{i}$, acting on the $i$th and $(i+1)$th points such that $\cap_{i}\cup_{i}\simeq\mathbf{e}_{i}$. Note that technically we need to add the condition that moving a cup or cap rightwards multiplies it by $-1$ to ensure rigidity, however this is not required for our results. The following corollary is a well known result:
\begin{corr}
The projection $X^{\otimes n}\rightarrow \mathcal{X}^{+}_{n+1}\rightarrow X^{\otimes n}$, is given by the Jones-Wenzl projection, $\mathfrak{f}_{n}$.\\
\end{corr}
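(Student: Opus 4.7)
The plan is to use the standard characterization of $\mathfrak{f}_n$ as the unique element of $TL_n$ that (a) is annihilated on both sides by every generator $\mathbf{e}_i$, and (b) has coefficient $1$ on the identity diagram when written in the Temperley--Lieb diagram basis. Denote by $p_n\in End_{\bar{U}_{q}(\mathfrak{sl}_{2})}(X^{\otimes n})$ the idempotent $X^{\otimes n}\twoheadrightarrow \mathcal{X}^{+}_{n+1}\hookrightarrow X^{\otimes n}$; I will verify that the image of $p_n$ under the isomorphism $End_{\bar{U}_{q}(\mathfrak{sl}_{2})}(X^{\otimes n})\simeq TL_n(q+q^{-1})$ of the previous proposition satisfies both (a) and (b), and then conclude $p_n=\mathfrak{f}_n$.

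For condition (a), the decomposition $\mathbf{e}_i=\cap_i\cup_i$ shows $\mathbf{e}_i$ factors through $X^{\otimes (n-2)}$. As recorded after the fusion rules, $\mathcal{X}^{+}_{n+1}$ first appears in the decomposition of $X^{\otimes n}$, so every simple summand of $X^{\otimes (n-2)}$ is some $\mathcal{X}^{+}_{k}$ with $k\le n-1$. Hence the $\bar{U}_{q}(\mathfrak{sl}_{2})$-linear map $\cup_i$ vanishes on the $\mathcal{X}^{+}_{n+1}$-component of $X^{\otimes n}$, giving $\mathbf{e}_i p_n=0$; symmetrically the image of $\cap_i$ contains no copy of $\mathcal{X}^{+}_{n+1}$, so $p_n \mathbf{e}_i=0$.

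For condition (b), I would test $p_n$ against the distinguished weight vector $x_{0,n}=\nu_{0}^{\otimes n}$. Its $K$-eigenvalue is $q^{n}$, and among the summands of $X^{\otimes n}$ only $\mathcal{X}^{+}_{n+1}$ carries this top weight, so $x_{0,n}$ lies entirely in that summand and $p_n(x_{0,n})=x_{0,n}$. Conversely, every non-identity basis diagram in $TL_n$ is a non-empty product of generators $\mathbf{e}_{i}$, and from the explicit formulas of the previous proposition $\mathbf{e}_i(\nu_{0}^{\otimes n})=\cap_i\cup_i(\nu_{0}^{\otimes n})=0$ because $\cup(\nu_{00})=0$. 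Hence $T(x_{0,n})=0$ for any non-identity diagram $T$, and expanding $p_n=\sum_T c_T T$ yields $c_{\mathrm{id}}\,x_{0,n}=p_n(x_{0,n})=x_{0,n}$, forcing $c_{\mathrm{id}}=1$.

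The one step requiring care is the appeal to uniqueness of an element of $TL_n$ satisfying (a) and (b); this is the only non-formal ingredient and is a standard consequence of the defining recursion. If one prefers to avoid citing it, the same conclusion can be reached by induction on $n$, using $\mathfrak{f}_{n+1}=\mathfrak{f}_{n}\otimes 1-\tfrac{[n]}{[n+1]}\mathfrak{f}_{n}\mathbf{e}_{n}\mathfrak{f}_{n}$ together with the splitting $\mathcal{X}^{+}_{n}\otimes X\simeq\mathcal{X}^{+}_{n-1}\oplus\mathcal{X}^{+}_{n+1}$ from the fusion rules to match both sides of the recursion as morphisms of $\bar{U}_{q}(\mathfrak{sl}_{2})$-modules, noting that $[n+1]\neq 0$ in the range $n+1\le p$ where $\mathcal{X}^{+}_{n+1}$ is defined.
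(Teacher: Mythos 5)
Your argument is correct, but it takes a different route from the paper. You characterize $\mathfrak{f}_n$ abstractly as the unique element of $TL_n$ annihilated on both sides by every $\mathbf{e}_i$ and having identity coefficient $1$, and you check that the idempotent $p_n$ satisfies these two conditions via the fusion rules (no copy of $\mathcal{X}^{+}_{n+1}$ in $X^{\otimes(n-2)}$, so cupping and capping kill $p_n$) and a top-weight-vector evaluation (only $\mathcal{X}^{+}_{n+1}$ carries the weight $q^{n}$, and every non-identity diagram kills $\nu_0^{\otimes n}$). The paper instead writes the projection explicitly in the weight basis, as $\rho_{i_{1},...,i_{k},n}\mapsto q^{\big(kn-\frac{1}{2}(k^{2}-k)-\sum_{j}i_{j}\big)}\frac{([n-k]!)}{([n]!)}F^{k}x_{0,n}$, and asserts that this formula satisfies the Jones--Wenzl recursion $\mathfrak{f}_{n+1}=\mathfrak{f}_{n}\otimes 1-\frac{[n]}{[n+1]}\mathfrak{f}_{n}\mathbf{e}_{n}\mathfrak{f}_{n}$; that explicit formula is also what the paper reuses as the template for defining $\alpha$ and $\beta$. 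Your approach buys a cleaner, computation-free proof that makes the role of semisimplicity in the range $n\le p-1$ transparent, at the cost of invoking the standard uniqueness lemma for $\mathfrak{f}_n$ (which you correctly flag, and which does require $[k]\neq 0$ for $k\le n$, satisfied here); the paper's approach buys the explicit coefficients, which are needed downstream. Both are complete modulo standard facts, and your weight-space argument for the identity coefficient is exactly right.
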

Expicitly, this projection is given by $\rho_{i_{1},...,i_{k},n}\mapsto q^{\big(kn-\frac{1}{2}(k^{2}-k)-(\sum\limits_{j=1}^{k}i_{j})\big)}\frac{([n-k]!)}{([n]!)}F^{k}x_{0,n}$. This can then be shown to satisfy the Jones-Wenzl recursion relation.\\
\\
We saw in Proposition \ref{prop1} that $\dim\left( End(X^{\otimes (2p-1)})\right)=C_{2p-1}+3$, and so $End(X^{\otimes (2p-1)})$ must contain extra generators. Our main focus now is to describe these generators and their relations.\\
\\
\subsection{The Generators, $\alpha$ and $\beta$.\label{gen}}
We define the following maps on $X^{\otimes(2p-1)}$:
\begin{align*}
\alpha(\rho_{i_{1},...,i_{k},2p-1})&:=q^{\big(k(2p-1)-\frac{1}{2}(k^{2}-k)-(\sum\limits_{j=1}^{k}i_{j})\big)}([k]!)E^{p-k-1}x_{2p-1}\\
\beta(\rho_{i_{1},...,i_{k},2p-1})&:=q^{\big(k(2p-1)-\frac{1}{2}(k^{2}-k)-(\sum\limits_{j=1}^{k}i_{j})\big)}([2p-1-k]!)F^{k-p}x_{0}
\end{align*}
where we take $E^{-1}=0$, $F^{-1}=0$. We often use a second, simplified definition; let $x\in X_{k,2p-1}$, and $e_{x},f_{x}\in\mathbb{K}$, such that $E^{k}x=e_{x}x_{0}$, $F^{2p-1-k}x=f_{x}x_{2p-1}$. Then we have:
\begin{align*}
\alpha(x)=& e_{x}E^{p-k-1}x_{2p-1}\\
\beta(x)=& f_{x}F^{k-p}x_{0}
\end{align*}
These are equivalent to the generators defined in \cite{GST}, up to a constant. In terms of weight spaces, the generators act as:
\begin{align*}
\alpha:& X_{k,2p-1}\rightarrow X_{k+p,2p-1}\\
\beta:& X_{k,2p-1}\rightarrow X_{k-p,2p-1} 
\end{align*} Then $\alpha$ is zero for $k\geq p$ and $\beta$ is zero for $k< p$. Hence $\alpha^{2}=\beta^{2}=0$. From their action on weight spaces, it is clear that $\alpha,\beta\notin TL_{2p-1}$. Let $\gamma:=(-1)^{p-1}([p-1]!)^2$. The following properties of $\alpha$ and $\beta$ were given in \cite{GST}:
\begin{prop}
\begin{align*}
End(X^{\otimes(2p-1)})&\simeq TL_{2p-1}(q+q^{-1})\oplus\mathbb{K}\alpha\oplus\mathbb{K}\beta\oplus\mathbb{K}\alpha\beta\\
\alpha\beta+\beta\alpha&=\gamma \mathfrak{f}_{2p-1}\\
\alpha\beta\alpha&=\gamma\alpha\\
\beta\alpha\beta&=\gamma\beta
\end{align*}
$\gamma^{-1}\alpha\beta$ and $\gamma^{-1}\beta\alpha$ give projections onto the two copies of $\mathcal{X}^{-}_{p}$ appearing in the decomposition of $X^{\otimes(2p-1)}$, $\alpha$ and $\beta$ are then maps between these two modules. \label{prop 3}
\end{prop}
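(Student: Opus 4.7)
The plan is to verify the four identities by direct computation on the weight-space basis $\{\rho_{i_{1},\ldots,i_{k},2p-1}\}$ of $X^{\otimes(2p-1)}$, and then to deduce the direct-sum decomposition from Proposition \ref{prop1} together with the weight-shift properties of $\alpha$ and $\beta$. The starting observation is that $\alpha$ maps $X_{k,2p-1}$ into $X_{k+p,2p-1}$ and vanishes for $k\geq p$, while $\beta$ maps $X_{k,2p-1}$ into $X_{k-p,2p-1}$ and vanishes for $k<p$. This immediately gives $\alpha^{2}=\beta^{2}=0$, and shows that $\alpha\beta$ is supported on $\bigcup_{k\geq p}X_{k,2p-1}$ while $\beta\alpha$ is supported on $\bigcup_{k<p}X_{k,2p-1}$, so the two operators have disjoint supports.

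For the central relation $\alpha\beta+\beta\alpha=\gamma\mathfrak{f}_{2p-1}$ I would evaluate both sides on a general $\rho=\rho_{i_{1},\ldots,i_{k},2p-1}$. For $k\geq p$ the simplified formulas give $\beta(\rho)=f_{\rho}F^{k-p}x_{0}$, and then $\alpha(F^{k-p}x_{0})=c\cdot E^{2p-k-1}x_{2p-1}$ for the scalar $c$ determined by $E^{k-p}F^{k-p}x_{0}=c\cdot x_{0}$. Because $x_{0}$ is killed by $E$ and has $K$-weight $q^{2p-1}$, the standard identity $E^{j}F^{j}v=[j]!\prod_{i=0}^{j-1}[\lambda-i]v$ applies with $\lambda=2p-1$, and using $[2p-j]=-[j]$ (valid since $q^{2p}=1$) this simplifies to $c=(-1)^{k-p}([k-p]!)^{2}$; in particular the case $k=2p-1$ already recovers the constant $\gamma=(-1)^{p-1}([p-1]!)^{2}$. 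The coefficients $e_{\rho}$ and $f_{\rho}$ themselves are computed from the coproduct via a quantum binomial expansion, and the case $k<p$ is entirely symmetric. The right-hand side $\gamma\mathfrak{f}_{2p-1}(\rho)$, evaluated on weight vectors, produces an explicit multiple of $F^{k}x_{0}$, and matching the two sides reduces to quantum-integer identities of the type collected in the Appendix. Once the central relation is established, composing with $\alpha$ on the right kills $\beta\alpha^{2}=0$ and yields $\alpha\beta\alpha=\gamma\mathfrak{f}_{2p-1}\alpha=\gamma\alpha$, where the second equality uses that the image of $\alpha$ lies in the copies of $\mathcal{X}^{-}_{p}$ on which $\mathfrak{f}_{2p-1}$ acts as the identity; the relation $\beta\alpha\beta=\gamma\beta$ is symmetric.

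For the direct-sum decomposition, every Temperley-Lieb element preserves each $X_{k,2p-1}$ individually, because the cup/cap maps preserve the $\nu_{1}$-count, whereas $\alpha$ and $\beta$ shift $k$ by $\pm p$; hence $\alpha,\beta\notin TL_{2p-1}$. Since $\alpha\beta$ annihilates $X_{k,2p-1}$ for $k<p$, it is not expressible as a linear combination of $TL_{2p-1}$ with $\alpha$ and $\beta$, and the dimension count $D_{2p-1}=C_{2p-1}+3$ from Proposition \ref{prop1} forces the decomposition to be direct. The projection statement for $\gamma^{-1}\alpha\beta$ and $\gamma^{-1}\beta\alpha$ then follows from $(\gamma^{-1}\alpha\beta)^{2}=\gamma^{-2}\alpha(\beta\alpha\beta)=\gamma^{-1}\alpha\beta$ and the dual identity, combined with the support analysis that identifies each idempotent with a single $\mathcal{X}^{-}_{p}$ summand. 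The principal obstacle I anticipate is the bookkeeping in the coefficient-matching step: the phase factor $q^{(k(2p-1)-k(k-1)/2-\sum i_{j})}$ appearing in the definitions of $\alpha,\beta$, and $\mathfrak{f}_{2p-1}$ must be tracked through the quantum binomial expansions of $E^{k}\rho$ and $F^{2p-1-k}\rho$ on general weight vectors, and recognizing the resulting sums as $\gamma=(-1)^{p-1}([p-1]!)^{2}$ requires careful application of the appendix identities.
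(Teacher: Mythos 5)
Your core computation follows the same route as the paper: both rest on explicitly evaluating $\beta\alpha$ and $\alpha\beta$ on weight vectors via the coproduct expansions and the appendix identities, and on deducing the direct-sum decomposition from the dimension count $D_{2p-1}=C_{2p-1}+3$ together with the weight-shift behaviour of $\alpha$ and $\beta$. Your coefficient $c=(-1)^{k-p}([k-p]!)^{2}$ agrees with the paper's $\frac{([k]!)}{([2p-k-1]!)[p]}$ after using $[p-j]=[j]$, so that part is sound.

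The genuine gap is your derivation of $\alpha\beta\alpha=\gamma\alpha$ from the anticommutator relation: you compose with $\alpha$ and invoke $\mathfrak{f}_{2p-1}\alpha=\alpha$ on the grounds that $\mathfrak{f}_{2p-1}$ acts as the identity on the copies of $\mathcal{X}^{-}_{p}$. At $q=e^{i\pi/p}$ this is not an off-the-shelf fact: the Wenzl recursion breaks down already at $\mathfrak{f}_{p}$ because $[p]=0$, so $\mathfrak{f}_{2p-1}$ is only available here through a renormalized explicit formula, and the assertion that it is the identity on the $\mathcal{X}^{-}_{p}$-isotypic component is essentially equivalent to $\gamma^{-1}(\alpha\beta+\beta\alpha)$ being the projection onto that component --- which is part of what the proposition asserts, so as written the step is close to circular. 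The paper sidesteps this by computing $\alpha(\beta(\alpha(x)))$ and $\beta(\alpha(\beta(x)))$ directly, i.e.\ by applying the formulas for $F^{j}\alpha(x)$ and $E^{j}\beta(x)$ once more to the expressions for $\beta\alpha(x)$ and $\alpha\beta(x)$ that you have already obtained; that is the easy repair, and it also yields the constant $\gamma$ for all $k$ rather than only at $k=2p-1$. A smaller point: your argument that $\alpha\beta$ contributes a new dimension needs more than disjointness of supports, since $\alpha\beta$ preserves each weight space exactly as Temperley--Lieb elements do; evaluating on $\nu_{1}^{\otimes(2p-1)}$, which every non-identity diagram kills but which $\alpha\beta$ sends to $\gamma\nu_{1}^{\otimes(2p-1)}$, together with $\alpha\beta(\nu_{0}^{\otimes(2p-1)})=0$, settles it.
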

See Sections \ref{subsect 2}, \ref{subsect 3}, and \ref{subsect 6} for proofs.\\
\\
The maps $\alpha$ and $\beta$ can be given diagrammatically as boxes with $2p-1$ points along the top and bottom (or a single thick string to represent multiple strings). They then form the generators of the $\bar{U}_{q}(\mathfrak{sl}_{2})$ planar algebra.\\
We denote by $\alpha_{i}$, $\beta_{i}$ the elements $1^{\otimes (i-1)}\otimes\alpha\otimes 1^{\otimes(n-2p-i+2)},1^{\otimes (i-1)}\otimes\beta\otimes 1^{\otimes(n-2p-i+2)}\in End(X^{\otimes n})$.\\
\begin{thm}\label{thm}
The generators, $\alpha_{i}$ and $\beta_{i}$, satisfy the following properties:
\begin{align}
\alpha^{2}&=\beta^{2}=0 \label{eq:1}\\
 \alpha\beta\alpha&=\gamma\alpha \label{eq:2}\\
 \beta\alpha\beta&=\gamma\beta \label{eq:3}\\
 \gamma&=(-1)^{p-1}([p-1]!)^{2} \nonumber \\
 \alpha_{i}\alpha_{j}=\alpha_{j}\alpha_{i}&=\beta_{i}\beta_{j}=\beta_{j}\beta_{i}=0, \:\: \lvert i-j\rvert< p \label{eq:4}\\
 \alpha_{i}\alpha_{i+p}&=\alpha_{i+p}\alpha_{i} \label{eq:5}\\
 \beta_{i}\beta_{i+p}&=\beta_{i+p}\beta_{i} \label{eq:6}\\
 \alpha\beta+\beta\alpha&=\gamma \mathfrak{f}_{2p-1} \label{eq:7}
 \end{align}
Denote by $R_{n}$ the (clockwise) (n,n)-point annular rotation tangle. We then have:
 \begin{align}
 \alpha\cap_{i}=\cup_{i}\alpha&=\beta\cap_{i}=\cup_{i}\beta=0, \:\: 1\leq i\leq 2p-2 \label{eq:8}\\
 \alpha_{i+1}\cap_{i}&=\alpha_{i}\cap_{i+2p-2} \label{eq:9}\\
  \beta_{i+1}\cap_{i}&=\beta_{i}\cap_{i+2p-2} \label{eq:10}\\
 \cup_{i}\alpha_{i+1}&=\cup_{i+2p-2}\alpha_{i} \label{eq:11}\\
 \cup_{i}\beta_{i+1}&=\cup_{i+2p-2}\beta_{i} \label{eq:12}\\
 R_{4p-2}(\alpha)&=\alpha \label{eq:13}\\
 R_{4p-2}(\beta)&=\beta \label{eq:14}\\
 \sum\limits_{i=0}^{4p-1} k_{i}R^{i}_{4p}(\alpha\otimes 1)&=0 \label{eq:15}\\
 \sum\limits_{i=0}^{4p-1} k_{i}R^{i}_{4p}(\beta\otimes 1)&=0 \label{eq:16}
 \end{align}
 where $k_{i}=(-1)^{i}[i-2]k_{1}+(-1)^{i}[i-1]k_{2}$, for arbitrary $k_{1},k_{2}\in\mathbb{K}$.\\
 \\
 Diagrammatically, these relations are:
 \begin{figure}[H]
 	\centering
 	\includegraphics[width=0.3\linewidth]{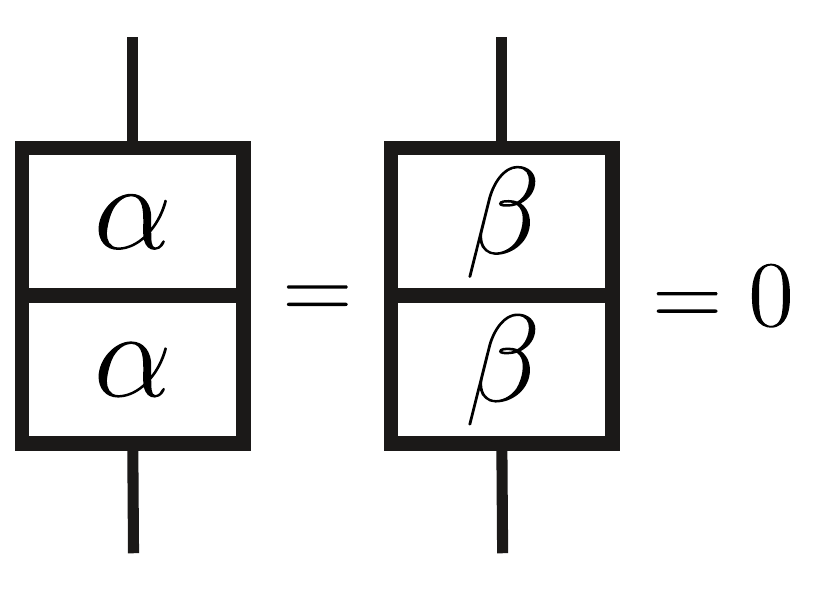}
 	\caption{Relation \ref{eq:1}}
 \end{figure}
 \begin{figure}[H]
 	\centering
 	\includegraphics[width=0.5\linewidth]{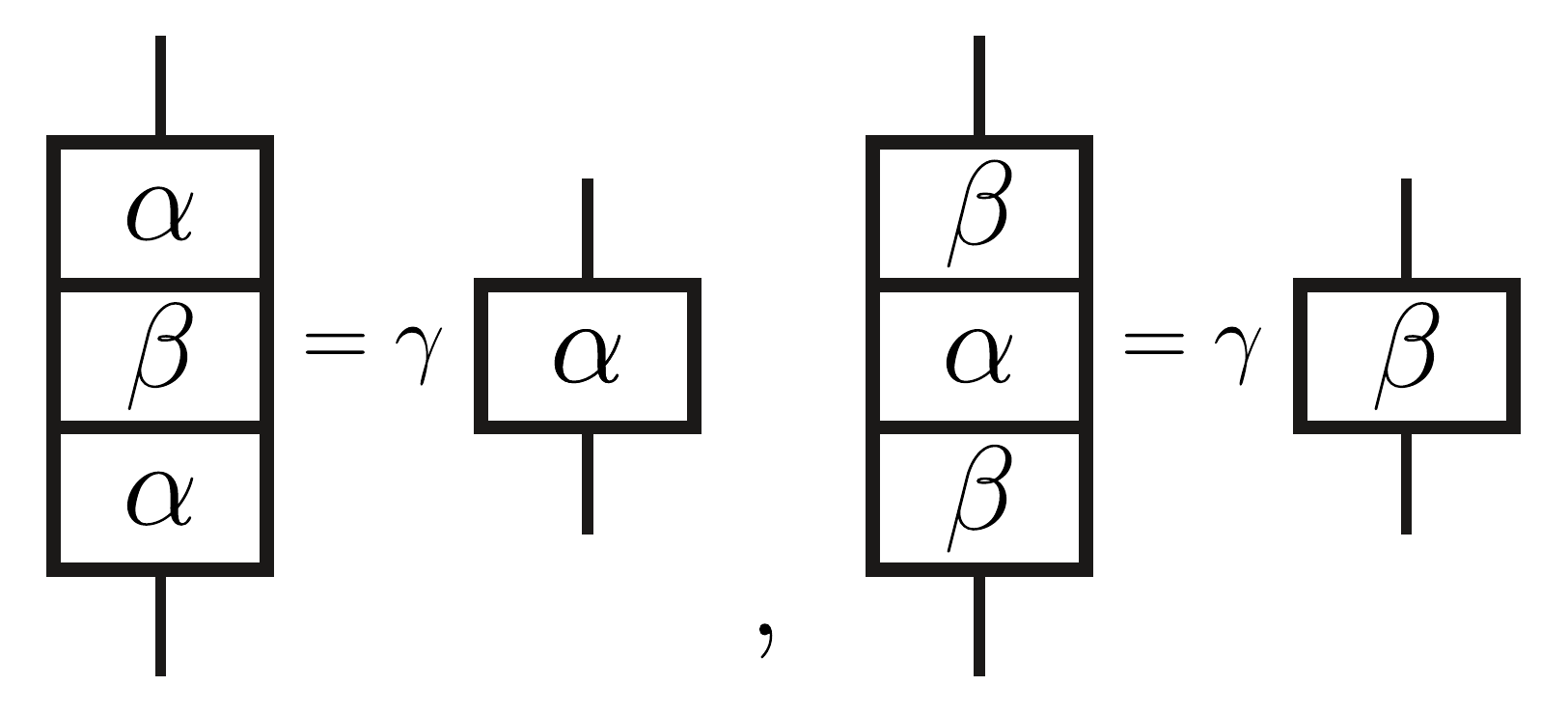}
 	\caption{Relation \ref{eq:2} and \ref{eq:3}}
 \end{figure}
 \begin{figure}[H]
 	\centering
 	\includegraphics[width=0.5\linewidth]{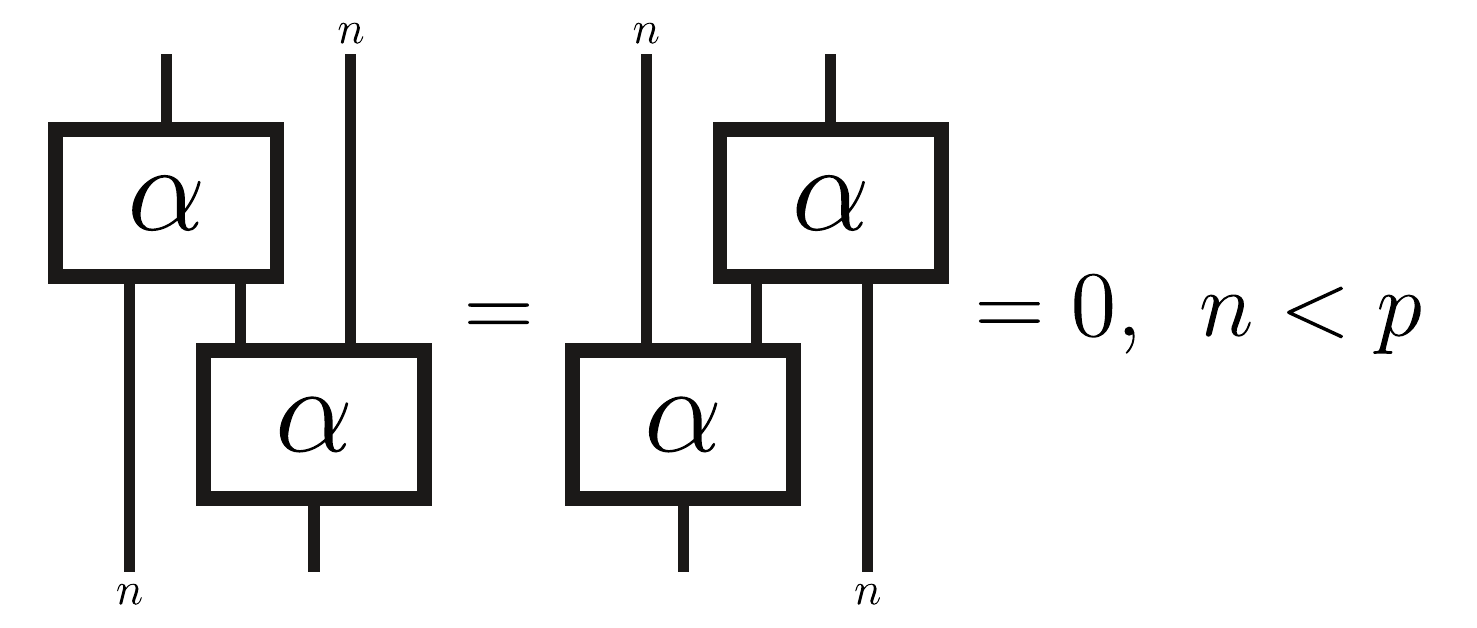}
 	\caption{Relation \ref{eq:4}}
 \end{figure}
 \begin{figure}[H]
 	\centering
 	\includegraphics[width=0.5\linewidth]{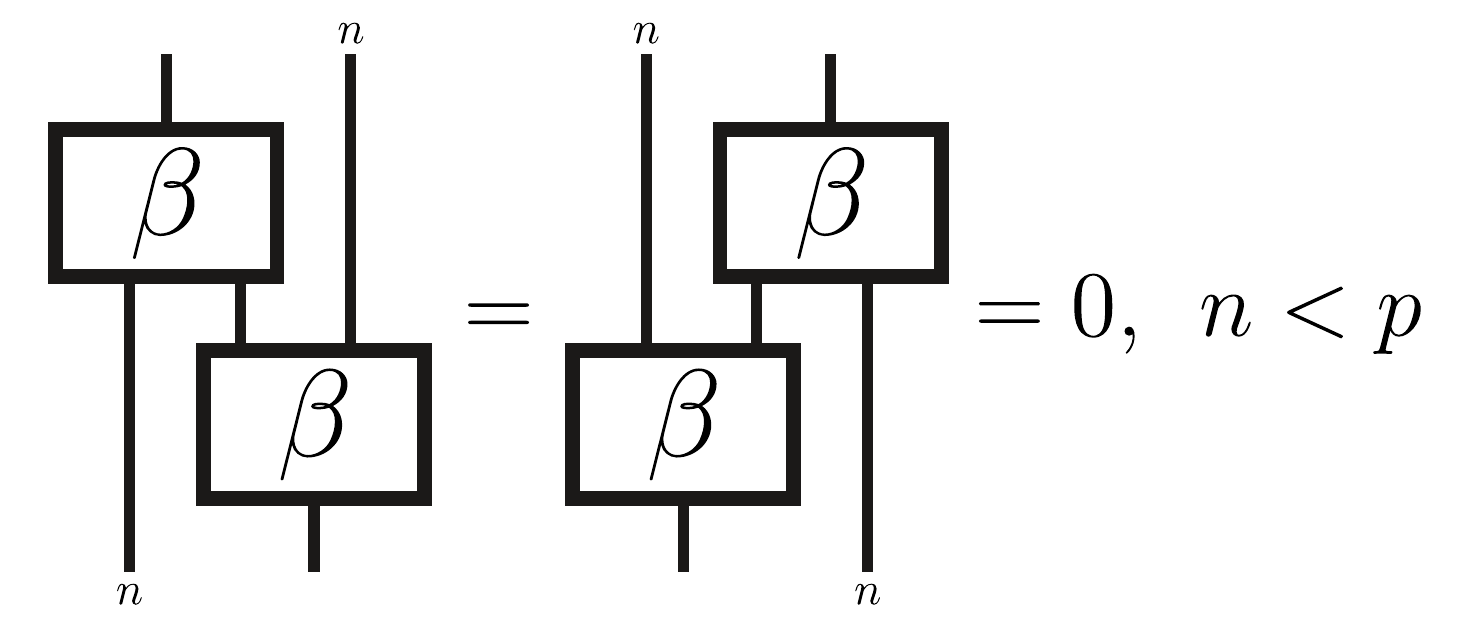}
 	\caption{Relation \ref{eq:4}}
 \end{figure}
 \begin{figure}[H]
 	\centering
 	\includegraphics[width=0.8\linewidth]{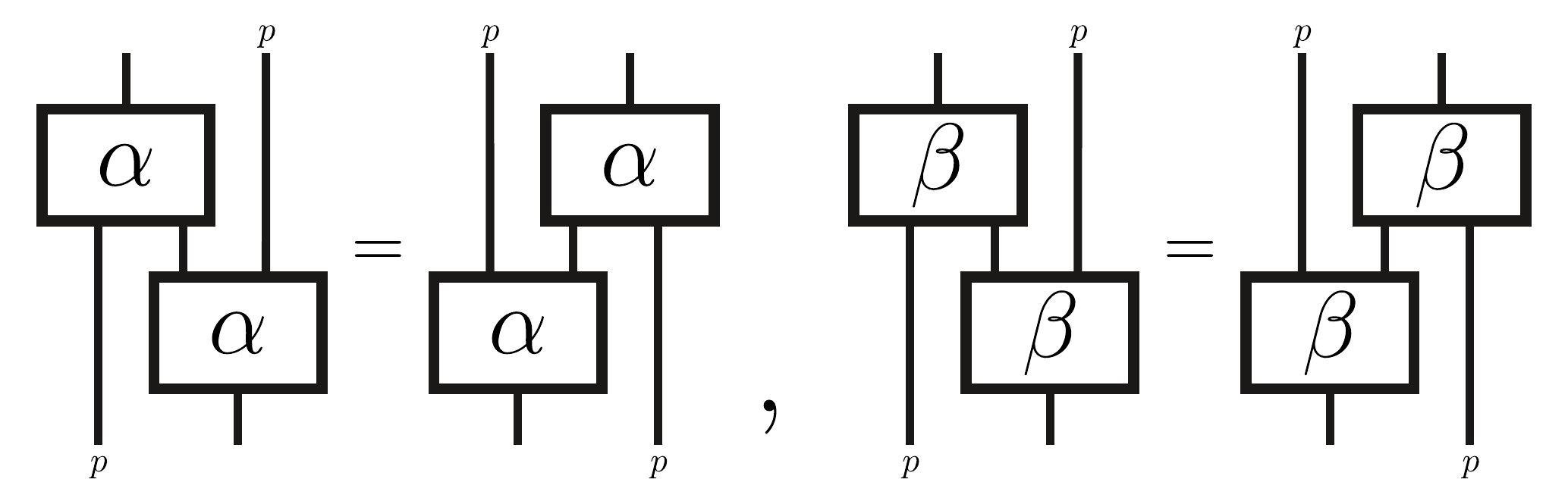}
 	\caption{Relation \ref{eq:5} and \ref{eq:6}}
 \end{figure}
 \begin{figure}[H]
 	\centering
 	\includegraphics[width=0.4\linewidth]{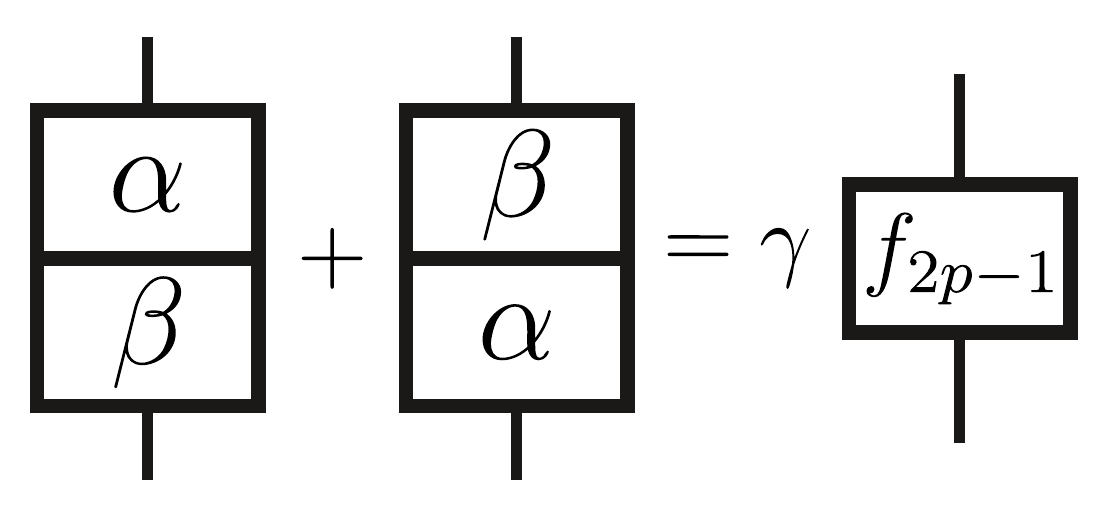}
 	\caption{Relation \ref{eq:7}}
 \end{figure}
 \begin{figure}[H]
 	\centering
 	\includegraphics[width=0.7\linewidth]{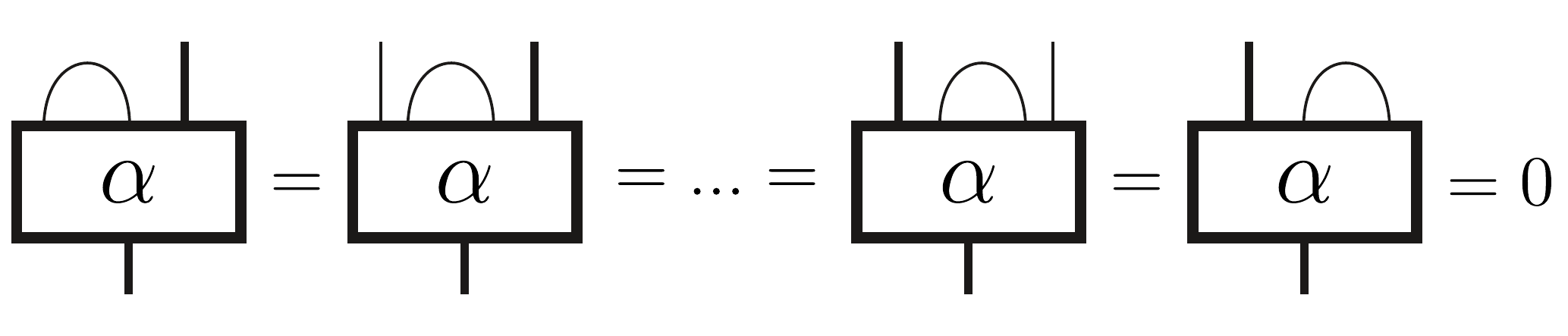}
 	\includegraphics[width=0.7\linewidth]{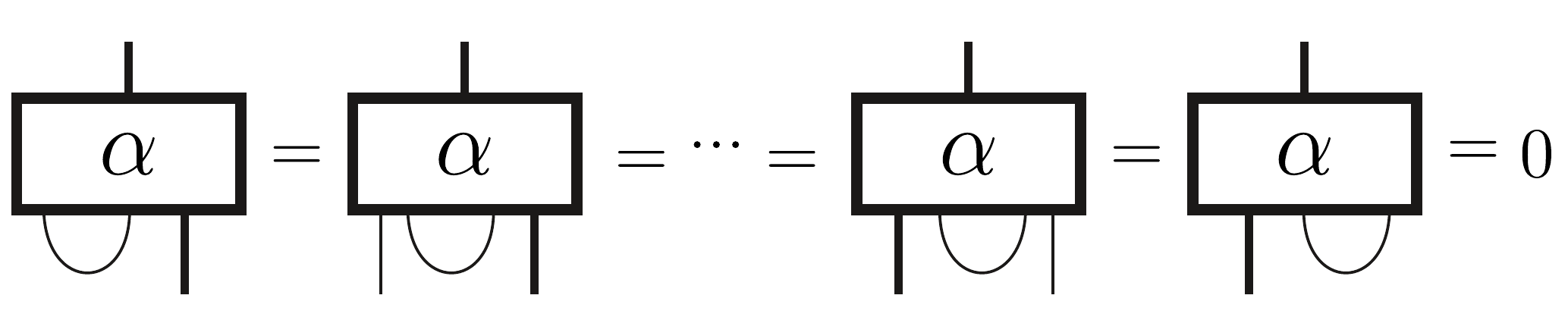}
 	\caption{Relation \ref{eq:8}}
 \end{figure}
 \begin{figure}[H]
 	\centering
 	\includegraphics[width=0.7\linewidth]{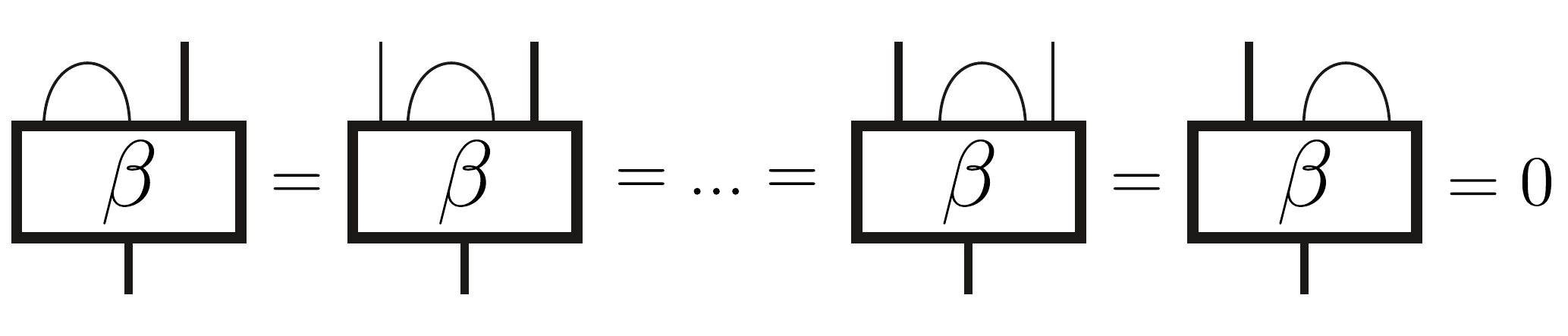}
 	\includegraphics[width=0.7\linewidth]{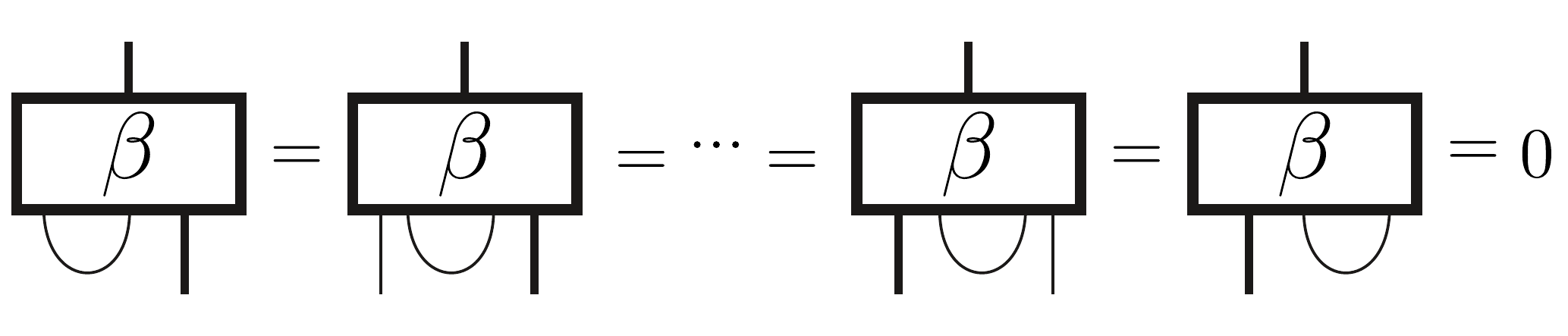}
 	\caption{Relation \ref{eq:8}}
 \end{figure}
 \begin{figure}[H]
 	\centering
 	\includegraphics[width=0.7\linewidth]{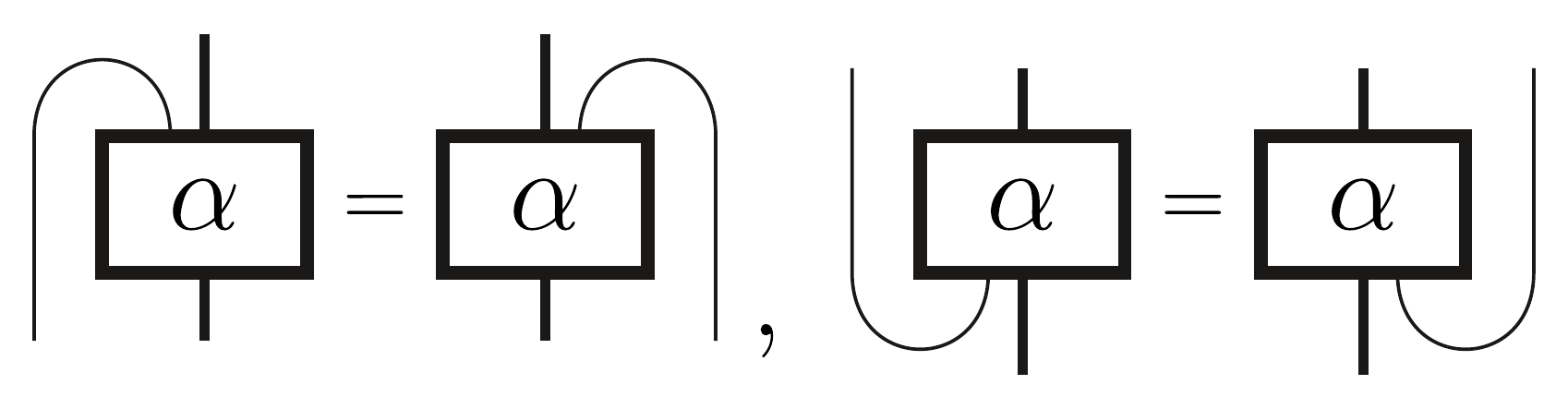}
 	\caption{Relation \ref{eq:9} and \ref{eq:11}}
 \end{figure}
 \begin{figure}[H]
 	\centering
 	\includegraphics[width=0.7\linewidth]{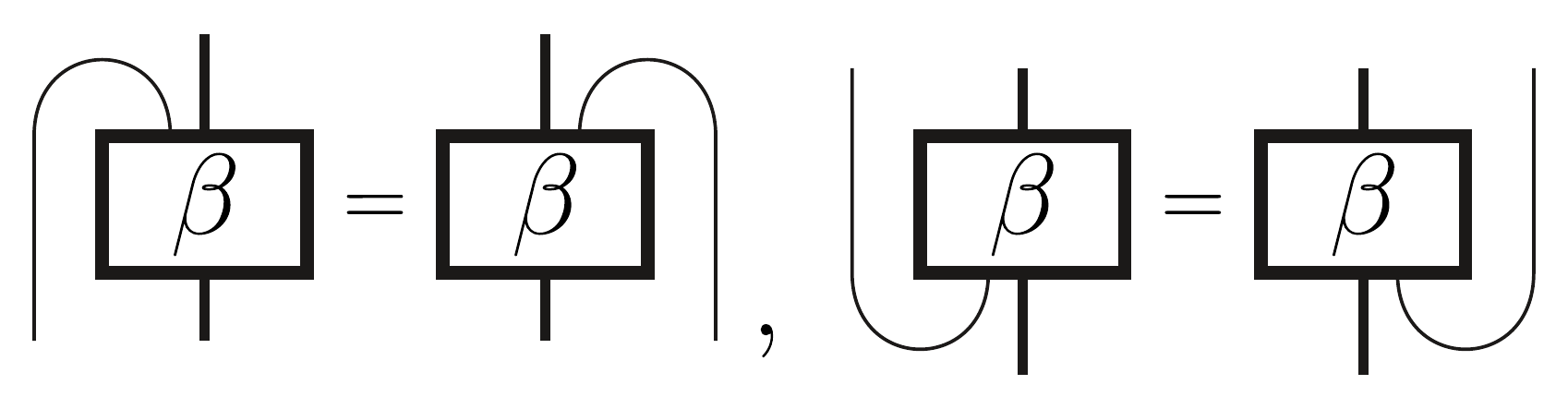}
 	\caption{Relation \ref{eq:10} and \ref{eq:12}}
 \end{figure}
 \begin{figure}[H]
 	\centering
 	\includegraphics[width=0.7\linewidth]{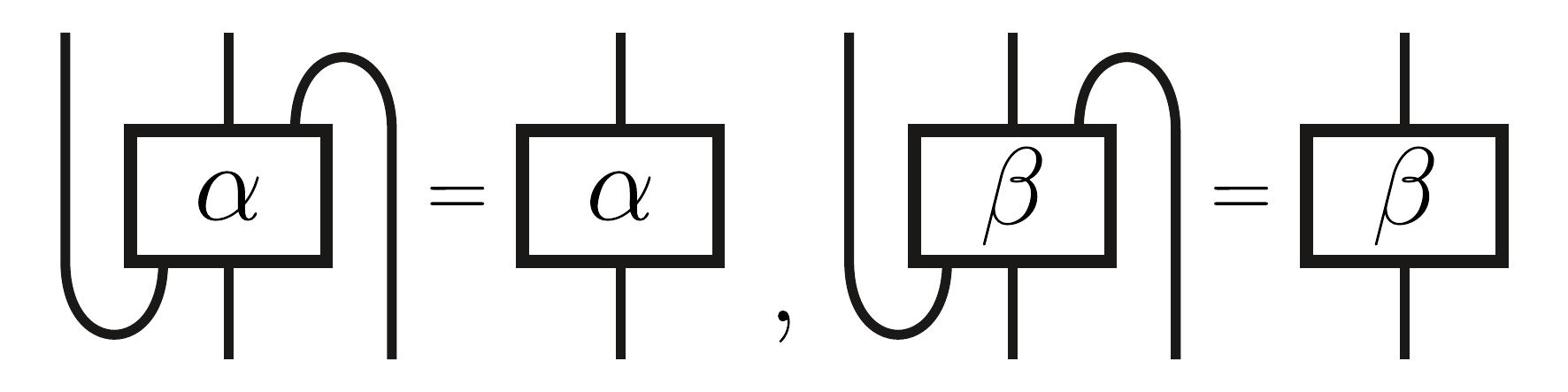}
 	\caption{Relation \ref{eq:13} and \ref{eq:14}}
 \end{figure}
 \begin{figure}[H]
 	\centering
 	\includegraphics[width=0.9\linewidth]{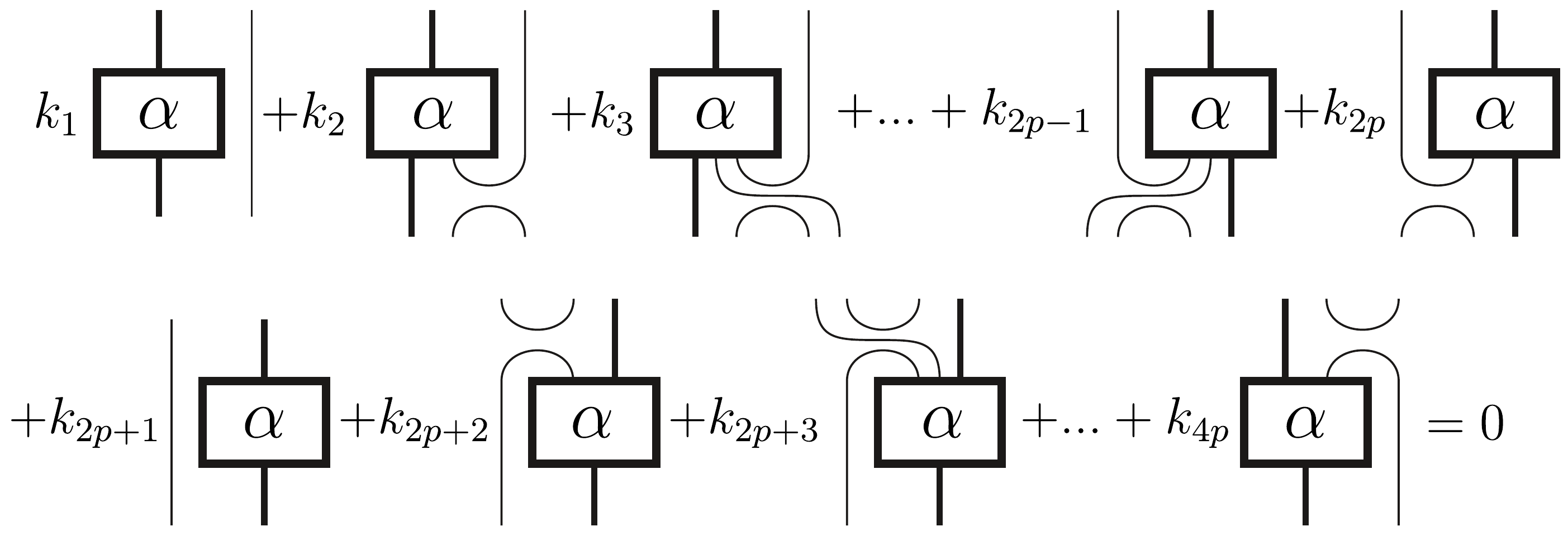}
 	\caption{Relation \ref{eq:15}} \label{fig:rot}
 \end{figure}
 \begin{figure}[H]
 	\centering
 	\includegraphics[width=0.9\linewidth]{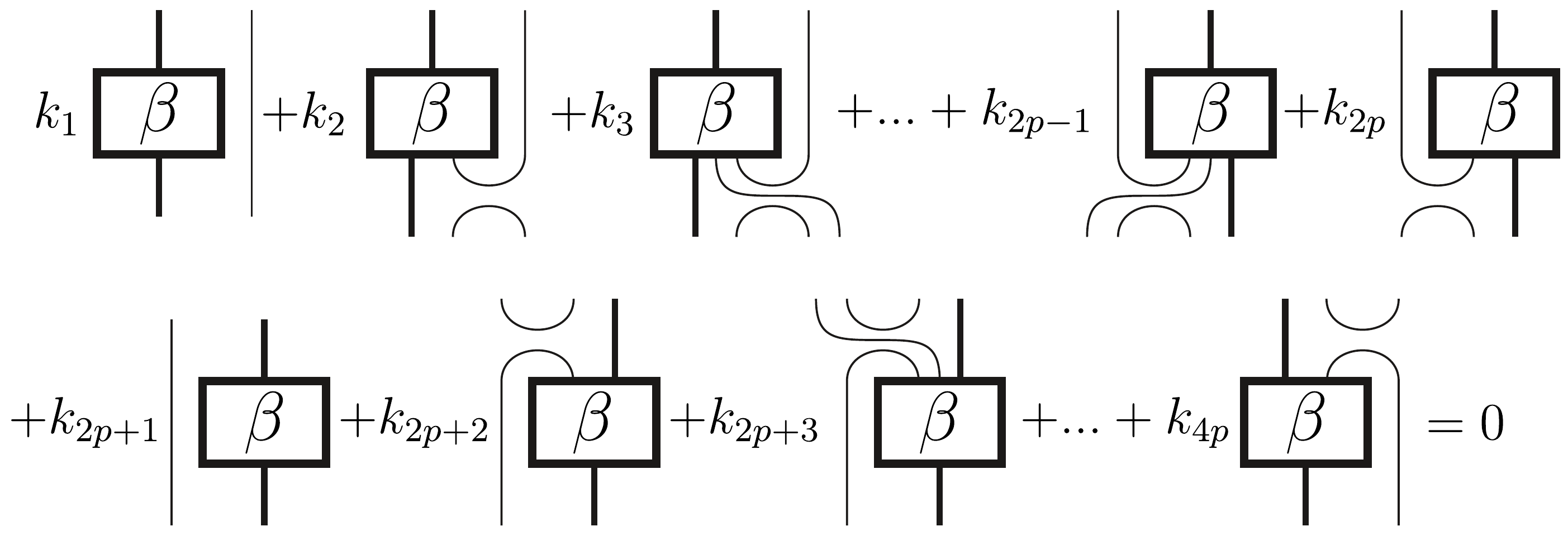}
 	\caption{Relation \ref{eq:16}}
 \end{figure}
 We also have the partial traces given by:
 \begin{figure}[H]
 	\centering
 	\includegraphics[width=0.6\linewidth]{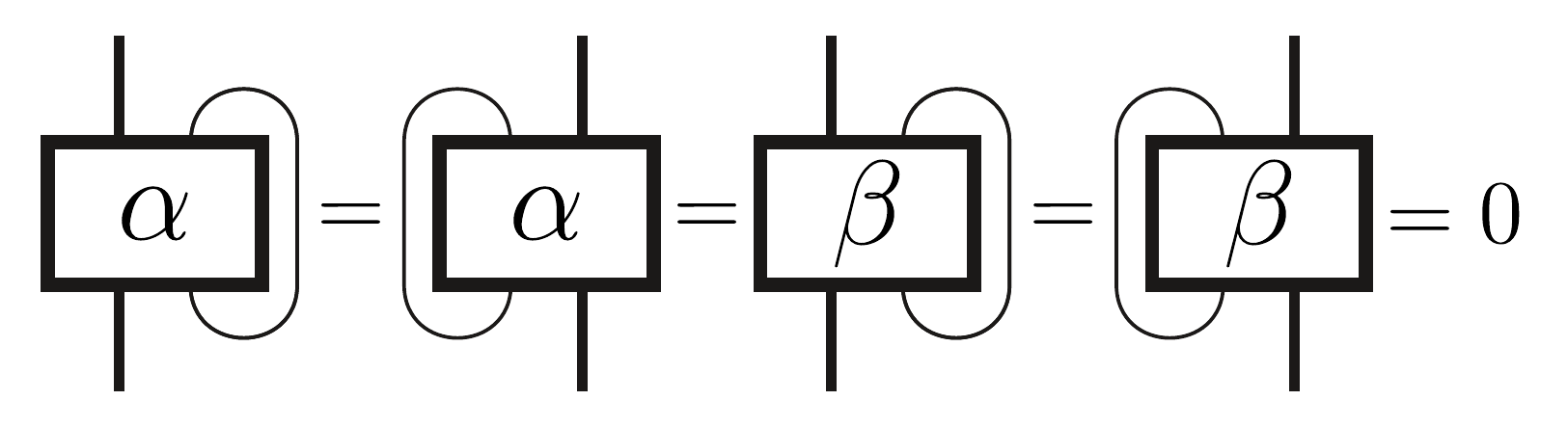}
 	\caption{The partial trace of $\alpha$ and $\beta$.}
 \end{figure}
 \begin{figure}[H]
 	\centering
 	\includegraphics[width=0.6\linewidth]{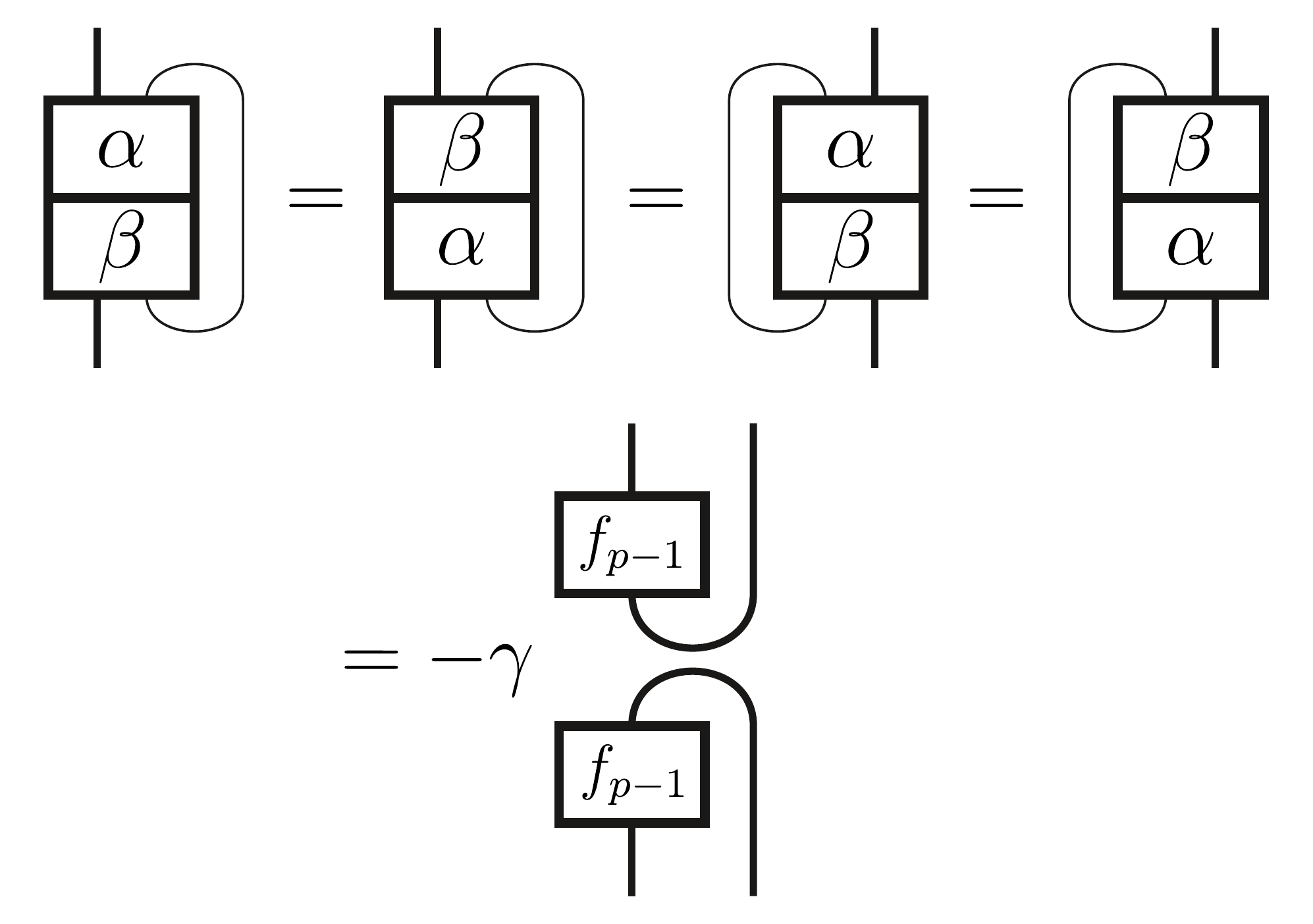}
 	\caption{The partial trace of $\alpha\beta$ and $\beta\alpha$.} \label{fig:pt}
 \end{figure}
\end{thm}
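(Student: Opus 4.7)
The plan is to verify each relation by direct evaluation on the weight basis $\{\rho_{i_{1},\ldots,i_{k},z}\}$ of $X^{\otimes z}$, reducing everything to combinatorial identities about the $\bar{U}_{q}(\mathfrak{sl}_{2})$-action of the type tabulated in the Appendix. The vanishing relations (1), (4), and the $\alpha\cap_{i}=\cup_{i}\alpha=0$ halves of (8) should fall straight out of weight bookkeeping: $\alpha$ shifts weight by $+p$ and $\beta$ by $-p$, and the source $X^{\otimes(2p-1)}$ has weights in $\{0,\ldots,2p-1\}$, so $\alpha^{2}=0$ because the image of $\alpha$ lies in $X_{k+p,2p-1}$ which is outside $\alpha$'s own nonzero domain; similarly, a cap attached to $\alpha$ or $\beta$ produces a vector whose weight in the shortened tensor product falls outside the nonvanishing range. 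Relation (4) follows from the same principle applied locally to the $2p-1$ overlapping positions of $\alpha_{i}$ and $\alpha_{j}$ when $|i-j|<p$.

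For the scalar identities (2), (3), and (7), I would apply each side to a weight basis vector $\rho\in X_{k,2p-1}$ and use the simplified formulas $\alpha(x)=e_{x}E^{p-k-1}x_{2p-1}$, $\beta(x)=f_{x}F^{k-p}x_{0}$. The triple product $\alpha\beta\alpha(\rho)$ reduces to $e_{\rho}\cdot f_{E^{p-k-1}x_{2p-1}}\cdot e_{F^{k}x_{0}}$ times $E^{p-k-1}x_{2p-1}$; the two unknown scalars are determined by $F^{p-k-1}E^{p-k-1}x_{2p-1}$ and $E^{k}F^{k}x_{0}$, which can be evaluated by the standard recursion coming from $[E,F]=(K-K^{-1})/(q-q^{-1})$ together with $Ex_{2p-1}=0=Fx_{0}$. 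Their product telescopes to $\gamma=(-1)^{p-1}([p-1]!)^{2}$ independently of $k$, yielding (2); (3) is symmetric. For (7), the explicit formula for $\mathfrak{f}_{2p-1}$ recalled after the first corollary expresses $\mathfrak{f}_{2p-1}(\rho)$ as a single scaled $F^{k}x_{0}$, and $\alpha\beta+\beta\alpha$ evaluated on $\rho$ splits into the two branches $k<p$ and $k\geq p$ which recombine into the same scalar multiple of $F^{k}x_{0}$ with prefactor $\gamma$ from the same telescoping.

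Relations (5), (6), (9)--(14) are compatibility statements handled by matching scalars on each side. For (5) and (6), $\alpha_{i}\alpha_{i+p}$ and $\alpha_{i+p}\alpha_{i}$ overlap in $p-1$ positions; applied to a weight basis vector of $X^{\otimes(3p-1)}$, both sides produce the same high-weight image with the same scalar prefactor, the equality following from commutation of the $E$-actions on the disjoint halves. Relations (9)--(12) follow from the remaining (non-vanishing) halves of (8) together with the observation that the defining scalars $e_{x},f_{x}$ depend only on the $\bar{U}_{q}$-weight, so bending a strand around the box relocates one boundary point without changing the defining data; iterating this around the full $2p-1$ perimeter points gives the cyclic invariance (13), (14).

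The main obstacle will be (15) and (16). These encode a two-parameter family of null relations, indexed by $k_{1},k_{2}\in\mathbb{K}$, among the $4p$ rotations of $\alpha\otimes 1$ inside $End(X^{\otimes 4p})$. My plan is to compute each $R^{i}_{4p}(\alpha\otimes 1)$ in the weight basis, show that as $i$ varies these elements together span only a two-dimensional subspace of $End(X^{\otimes 4p})$, and then identify the coefficients annihilating that subspace; the appearance of $[i-1]$ and $[i-2]$ in $k_{i}$ should emerge from the Chebyshev-type recursion $[i+1]=(q+q^{-1})[i]-[i-1]$ relating successive rotations through a single strand, with the sign $(-1)^{i}$ tracking the $-1$ picked up each time a cup or cap is moved past the box. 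This is the most delicate step because it requires a global description of the rotational action rather than a purely local weight computation, and pinning down the two free parameters means identifying exactly which functionals on $End(X^{\otimes 4p})$ annihilate the image.
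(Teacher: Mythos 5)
Your overall strategy---direct evaluation on the weight basis using the coproduct and commutation identities from the Appendix---is indeed the paper's strategy for relations (1)--(7), (9) and (10), and your account of (1), (2), (3) and (7) matches the paper's telescoping computation. But several of the mechanisms you propose would not go through. Relation (8) does \emph{not} fall out of weight bookkeeping: the image of $\alpha$ lies in $X_{k+p,2p-1}$ with $k\leq p-1$, and $\cup_{i}$ sends this to $X_{k+p-1,2p-3}$, a perfectly legitimate weight space of $X^{\otimes(2p-3)}$ whenever $k<p-1$; likewise $\alpha\cap_{i}$ can land in the nonvanishing domain of $\alpha$. The paper instead argues representation-theoretically: $Im(\alpha)\simeq Im(\beta)\simeq\mathcal{X}^{-}_{p}$ is irreducible and does not occur in the decomposition of $X^{\otimes(2p-3)}$, so any composite factoring through it vanishes. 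Relatedly, relation (4) is not "the same principle" as $\alpha^{2}=0$: a pure domain/codomain weight count fails, and the actual vanishing comes from $E^{p}=0$ applied to the block of $2p-1-|i-j|\geq p$ consecutive strands that the first generator drives to $\nu_{1}$. For (5) and (6) the supports of $\alpha_{1}$ and $\alpha_{1+p}$ overlap in $p-1$ strands, so "commutation of the $E$-actions on the disjoint halves" is not available; the paper's proof is a genuine double-sum reindexing whose coefficients only agree because $([x]!)([p-1-x]!)=([p-1]!)$ at a $2p$-th root of unity. Your route to (9)--(12) rests on the claim that $e_{x},f_{x}$ depend only on the weight, which is false (they depend on the positions $i_{1},\dots,i_{k}$ through $q^{-\sum_{j}i_{j}}$); the paper proves (9) and (10) by direct computation and only then deduces (11)--(14) diagrammatically.

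For (15) and (16) you have the coefficient recursion right in spirit---the paper obtains $k_{i-1}+\delta k_{i}+k_{i+1}=0$ by capping at each position and invoking (8)---but the existence of a nontrivial dependence is the real content, and "the rotations span a two-dimensional subspace" is not what happens: they span a $(4p-2)$-dimensional space, and it is the space of \emph{relations} that is two-dimensional. The paper gets the bound $4p-2$ by classifying which module homomorphisms on $X^{\otimes 2p}$ shift weight spaces by $p$ (only the maps $\mathcal{P}^{+}_{s}\leftrightarrow\mathcal{P}^{-}_{p-s}$ do) and counting them as $2M(\mathcal{P}^{+}_{1})+2$ with $M(\mathcal{P}^{+}_{1})=2p-2$; without some such representation-theoretic input your plan to exhibit the dependence by brute force in the weight basis has no clear endpoint. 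Finally, the theorem also asserts the partial-trace formulas, whose verification in the paper is a separate and fairly lengthy computation (reducing the capped-off $\beta\alpha$ to $\mathfrak{f}_{p-1}\otimes 1^{\otimes p-1}$ composed with cups and caps) that your proposal does not address at all.
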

\noindent Relations between $\alpha$ and $\beta$ and the Temperley-Lieb generators $\mathbf{e}_{i}$ follow immediately from relations \ref{eq:8}-\ref{eq:12}:
\begin{align}
\mathbf{e}_{i}\alpha_{j}=\alpha_{j} \mathbf{e}_{i}&=\mathbf{e}_{i}\beta_{j}=\beta_{j} \mathbf{e}_{i}=0, \:\:\: 0\leq i-j\leq 2p-3 \label{eq:17}\\
\mathbf{e}_{i}\alpha_{i+1}&=\mathbf{e}_{i}\mathbf{e}_{i+1}...\mathbf{e}_{i+2p-3}\mathbf{e}_{i+2p-2}\alpha_{i} \label{eq:18}\\
\alpha_{i+1}\mathbf{e}_{i}&=\alpha_{i}\mathbf{e}_{i+2p-2}\mathbf{e}_{i+2p-3}...\mathbf{e}_{i+1}\mathbf{e}_{i} \label{eq:19}\\
\mathbf{e}_{i}\beta_{i+1}&=\mathbf{e}_{i}\mathbf{e}_{i+1}...\mathbf{e}_{i+2p-3}\mathbf{e}_{i+2p-2}\beta_{i} \label{eq:20}\\
\beta_{i+1}\mathbf{e}_{i}&=\beta_{i}\mathbf{e}_{i+2p-2}\mathbf{e}_{i+2p-3}...\mathbf{e}_{i+1}\mathbf{e}_{i} \label{eq:21}
\end{align}
Relation \ref{eq:17} was given in Section 4.7 of \cite{GST}, and relations \ref{eq:18}-\ref{eq:21} generalize their results in Appendix C for $p=2$ to all $p$.\\

We note that there are generalizations of relations \ref{eq:15} and \ref{eq:16} to higher $n>2p$. These generalizations can be stated as any sum over all diagrams with $n$ strings along the top and bottom and containing a single $\alpha$ (respectively $\beta$) is linearly dependent. They further generalize to sums over all diagrams containing multiple $\alpha$ or $\beta$ for $n\geq 3p$. We omitted to include these generalizations in Theorem \ref{thm} as we expect they can be constructed from the relations already given.

\section{Proofs of Relations.}
The aim for the remainder of this paper is to give proofs of the relations in Theorem \ref{thm}. Relation $1$ follows immediately from the action of $\alpha$ and $\beta$ on the weight spaces.
\subsection{Proof that $End(X^{\otimes(2p-1)})\simeq TL_{2p-1}(q+q^{-1})\oplus\mathbb{K}\alpha\oplus\mathbb{K}\beta\oplus\mathbb{K}\alpha\beta$.\label{subsect 2}}
From Proposition \ref{prop1} we have that $\dim\left( End(X^{\otimes(2p-1)})\right)=\dim\left( TL_{2p-1}(q+q^{-1})\right) +3$. From the actions of $\alpha$, $\beta$, and $\alpha\beta$ in terms of weight spaces, we know that they are not in the Temperley-Lieb algebra. Hence we only need to check that $\alpha,\beta\in End(X^{\otimes(2p-1)})$, which is straightforward.\\

Note that $Im(\alpha)$ and $Im(\beta)$ are both $p$ dimensional, with weights $\{q^{2p-1},q^{2p-3},...,q^{1-2p}\}$\\ $=\{-q^{p-1},-q^{p-3},...,-q^{1-p}\}$, and so $Im(\alpha)\simeq Im(\beta)\simeq\mathcal{X}^{-}_{p}$.

\subsection{Relations \ref{eq:2} and \ref{eq:3}, $\alpha\beta\alpha=\gamma\alpha$, $\beta\alpha\beta=\gamma\beta$.\label{subsect 3}}
Let $x\in X_{k,2p-1}$, and $e_{x},f_{x}\in\mathbb{K}$, such that $E^{k}x=e_{x}x_{0}$, $F^{2p-1-k}=f_{x}x_{2p-1}$. Using equations \ref{eq:A4} and \ref{eq:A5}, it follows that:
\begin{align*}
F^{j}\alpha(x)=& e_{x}\frac{([k+j]!)([2p-k-1]!)}{([k]!)([2p-k-j-1]!)}E^{p-k-j-1}x_{2p-1}\\
E^{j}\beta(x)=& f_{x}\frac{([2p-k+j-1]!)([k]!)}{([2p-k-1]!)([k-j]!)}F^{k-p-j}x_{0}
\end{align*}
We can use these to apply $\alpha$ and $\beta$ repeatedly, giving:
\begin{align*}
\beta(\alpha(x))=& e_{x}\frac{([2p-k-1]!)}{([k]!)[p]}F^{k}x_{0}, \:\: 0\leq k\leq p-1\\
\alpha(\beta(x))=& f_{x}\frac{([k]!)}{([2p-k-1]!)[p]}E^{2p-k-1}x_{2p-1}, \:\: p\leq k\leq 2p-1\\
\alpha(\beta(\alpha(x)))=& e_{x}\frac{([2p-k-1]!)([k+p]!)}{([k]!)([p-k-1]!)[p]^{2}}E^{p-k-1}x_{2p-1}, \:\: 0\leq k\leq p-1\\
\beta(\alpha(\beta(x)))=& f_{x}\frac{([3p-k-1]!)([k]!)}{([k-p]!)([2p-k-1]!)[p]^{2}}F^{k-p}x_{0}, \:\: p\leq k\leq 2p-1
\end{align*}
Simplifying the coefficients in terms of our choice of $p$, we have:
\begin{align*}
\frac{([2p-k-1]!)([k+p]!)}{([k]!)([p-k-1]!)[p]^{2}}=&(-1)^{p-1}([p-1]!)^{2}, \:\: 0\leq k\leq p-1\\
\frac{([3p-k-1]!)([k]!)}{([k-p]!)([2p-k-1]!)[p]^{2}}=&(-1)^{p-1}([p-1]!)^{2}, \:\: p\leq k\leq 2p-1
\end{align*}
Let $\gamma:=(-1)^{p-1}([p-1]!)^2$, then it follows that:
\begin{align*}
\alpha(\beta(\alpha(x)))=&\gamma\alpha(x)\\
\beta(\alpha(\beta(x)))=&\gamma\beta(x)
\end{align*}

Given that $Im(\alpha)\simeq Im(\beta)\simeq \mathcal{X}^{-}_{p}$, it follows that $\gamma^{-1}\alpha\beta$ and $\gamma^{-1}\beta\alpha$ are the projections onto the two copies of $\mathcal{X}^{-}_{p}$ in $X^{\otimes (2p-1)}$.

\subsection{Relation \ref{eq:4}, $\alpha_{i}\alpha_{j}=\alpha_{j}\alpha_{i}=\beta_{i}\beta_{j}=\beta_{j}\beta_{i}=0, \:\: \lvert i-j\rvert< p$.\label{subsect 4}}
This follows from considering $\alpha_{1}\alpha_{1+k}$, $\alpha_{1+k}\alpha_{1}$, $\beta_{1}\beta_{1+k}$, and $\beta_{1+k}\beta_{1}$ acting on $X^{\otimes 2p-1-k}$. We give the proof for the case $\alpha_{1}\alpha_{1+k}$. The other cases follow similar arguments.\\

Let $x\in X_{j,k}$, $0\leq j\leq k$, with $E^{j}x=e_{x}x_{0,k}$, $y\in X_{l,2p-1-k}$, $0\leq y\leq 2p-1-k$, with $E^{l}Y=e_{y}x_{0,2p-1-k}$, and $z\in X_{m,k}$, $0\leq m\leq k$, with $E^{m}z=e_{z}x_{0,k}$. Consider $\alpha_{1}(\alpha_{1+k}(x\otimes y\otimes z))$. By use of equation \ref{eq:A6}, we have:
\begin{align*}
\alpha_{1+k}(x\otimes y\otimes z)=&\sum\limits_{i=0}^{p-l-m-1}e_{y}e_{z}q^{lk}\lambda_{l,l+m}\lambda_{i,p-l-m-1}x\otimes(E^{i}x_{2p-1-k,2p-1-k})\otimes(K^{i}E^{p-l-m-1-i}x_{k,k})
\end{align*}
where $\lambda_{l,l+m},\lambda_{i,p-l-m-1}\in\mathbb{K}$. To act $\alpha_{1}$ on this, we need $E^{2p-1-k}$ acting on $x_{2p-1-k,2p-1-k}$ in the middle tensor term. However, as $E^{p}=0$, this will be zero if $2p-1-k\geq p$, and hence $\alpha_{1}\alpha_{1+k}=0$ if $k\leq p-1$.

\subsection{Relations \ref{eq:5} and \ref{eq:6}, $\alpha_{i}\alpha_{i+p}=\alpha_{i+p}\alpha_{i}$, $\beta_{i}\beta_{i+p}=\beta_{i+p}\beta_{i}$.}
We prove the case $\alpha_{1}\alpha_{1+p}=\alpha_{1+p}\alpha_{1}$ which generalizes to the above relation. The proof for\\ $\beta_{i}\beta_{i+p}=\beta_{i+p}\beta_{i}$ follows similarly. Using the same notation as Section \ref{subsect 4}, we have\\ $\alpha_{1}(x\otimes y\otimes z)=$
\begin{align*}
& \sum\limits_{i=0}^{p-j-l-1}\lambda_{j,j+l}e_{x}e_{y}q^{-j(p+1)}\lambda_{i,p-j-l-1}(E^{i}x_{p,p})\otimes(K^{i}E^{p-j-l-i-1}x_{p-1,p-1})\otimes z
\end{align*}
Then we have $E^{j+l+i+m}((K^{i}E^{p-j-l-i-1}x_{p-1,p-1})\otimes z)=$
\begin{align*}
& \lambda_{j+l+i,j+l+i+m}e_{z}q^{2i(p-j-l-i-1)+i(1-p)+p(j+l+i)}([p-1]!)x_{0,2p-1}
\end{align*}
As $E^{p}=0$, we only need to consider the terms where $j+l+i+m\leq p-1$, which gives\\ $i\leq p-j-l-m-1$. Hence we have $\alpha_{1+p}(\alpha_{1}(x\otimes y\otimes z))=$
\begin{align*}
& \sum\limits_{i=0}^{p-j-l-m-1}\lambda_{j,j+l}e_{x}e_{y}q^{-j(p+1)}\lambda_{i,p-j-l-1}\lambda_{j+l+i,j+l+i+m}e_{z}q^{2i(p-j-l-i-1)+i(1-p)+p(j+l+i)}\times\\
&\times([p-1]!)(E^{i}x_{p,p})\otimes(E^{p-1-j-l-i-m}x_{2p-1,2p-1})\\
=&\sum\limits_{i=0}^{p-j-l-m-1}\bigg(\sum\limits_{n=0}^{p-1-j-l-i-m}e_{x}e_{y}e_{z}\lambda_{j,j+l}\lambda_{i,p-j-l-1}\lambda_{j+l+i,j+l+i+m}q^{(2ip-i-j-2ij-2il-2i^{2}+pl)}\times\\
&\times\lambda_{n,p-1-j-l-i-m}([p-1]!)(E^{i}x_{p,p})\otimes(E^{n}x_{p-1,p-1})\otimes(K^{n}E^{p-1-j-l-i-m-n}x_{p,p})\bigg)
\end{align*}
Next we have $\alpha_{1+p}(x\otimes y\otimes z)=$
\begin{align*}
& \sum\limits_{r=0}^{p-l-m-1}e_{y}e_{z}q^{lp}\lambda_{l,l+m}\lambda_{r,p-l-m-1}x\otimes(E^{r}x_{p-1,p-1})\otimes(K^{r}E^{p-l-m-r-1}x_{p,p})
\end{align*}
Then we have:
\begin{align*}
E^{j+p-r-1}(x\otimes(E^{r}x_{p-1,p-1}))=& e_{x}\lambda_{j,j+p-r-1}q^{j(p-1)}([p-1]!)x_{0,2p-1}
\end{align*}
Again as $E^{p}=0$, we only need consider the terms where $j+p-r-1\leq p-1$, which gives $r\geq j$. Hence we have $\alpha_{1}(\alpha_{1+p}(x\otimes y\otimes z))=$
\begin{align*}
& \sum\limits_{r=j}^{p-l-m-1}e_{y}e_{z}q^{lp}\lambda_{l,l+m}\lambda_{r,p-l-m-1}e_{x}\lambda_{j,j+p-r-1}q^{j(p-1)}([p-1]!)\times\\
&\times(E^{r-j}x_{2p-1,2p-1})\otimes(K^{r}E^{p-l-m-r-1}x_{p,p})\\
=&\sum\limits_{r=j}^{p-l-m-1}\bigg(\sum\limits_{s=0}^{r-j}e_{x}e_{y}e_{z}\lambda_{l,l+m}\lambda_{r,p-l-m-1}\lambda_{j,j+p-r-1}q^{lp+j(p-1)}([p-1]!)\times\\
&\times\lambda_{s,r-j}(E^{s}x_{p,p})\otimes(K^{s}E^{r-j-s}x_{p-1,p-1})\otimes(K^{r}E^{p-l-m-r-1}x_{p,p})\bigg)
\end{align*}
Let $t=r-j$, then this becomes:
\begin{align*}
&\sum\limits_{t=0}^{p-l-m-j-1}\bigg(\sum\limits_{s=0}^{t}e_{x}e_{y}e_{z}\lambda_{l,l+m}\lambda_{t+j,p-l-m-1}\lambda_{j,p-t-1}q^{lp+j(p-1)}([p-1]!)\times\\
&\times\lambda_{s,t}(E^{s}x_{p,p})\otimes(K^{s}E^{t-s}x_{p-1,p-1})\otimes(K^{t+j}E^{p-l-m-j-t-1}x_{p,p})\bigg)
\end{align*}
Using the summation identity $\sum\limits_{u=0}^{w}\sum\limits_{v=0}^{u}x_{u,v}=\sum\limits_{v=0}^{w}\sum\limits_{u=v}^{w}x_{u,v}$, this becomes:
\begin{align*}
&\sum\limits_{s=0}^{p-l-m-j-1}\bigg(\sum\limits_{t=s}^{p-l-m-j-1}e_{x}e_{y}e_{z}\lambda_{l,l+m}\lambda_{t+j,p-l-m-1}\lambda_{j,p-t-1}q^{lp+j(p-1)}([p-1]!)\times\\
&\times\lambda_{s,t}(E^{s}x_{p,p})\otimes(K^{s}E^{t-s}x_{p-1,p-1})\otimes(K^{t+j}E^{p-l-m-j-t-1}x_{p,p})\bigg)
\end{align*}
Let $n=t-s$, then we have:
\begin{align*}
&\sum\limits_{s=0}^{p-l-m-j-1}\bigg(\sum\limits_{n=0}^{p-l-m-j-1-s}e_{x}e_{y}e_{z}\lambda_{l,l+m}\lambda_{n+s+j,p-l-m-1}\lambda_{j,p-n-s-1}q^{lp+j(p-1)}([p-1]!)\times\\
&\times\lambda_{s,n+s}(E^{s}x_{p,p})\otimes(K^{s}E^{n}x_{p-1,p-1})\otimes(K^{n+s+j}E^{p-l-m-j-n-s-1}x_{p,p})\bigg)
\end{align*}
Letting $s=i$ we have:
\begin{align*}
=&\sum\limits_{i=0}^{p-l-m-j-1}\bigg(\sum\limits_{n=0}^{p-l-m-j-1-i}e_{x}e_{y}e_{z}\lambda_{l,l+m}\lambda_{n+i+j,p-l-m-1}\lambda_{j,p-n-i-1}([p-1]!)\times\\
&\times q^{(lp-i-2i^{2}-3j-4ij-2j^{2}-2il-2jl-2im-2jm-2jn+2jp)}\lambda_{i,n+i}\times\\
&\times(E^{i}x_{p,p})\otimes(E^{n}x_{p-1,p-1})\otimes(K^{n}E^{p-l-m-j-n-i-1}x_{p,p})\bigg)
\end{align*} 
This is now the same summation as $\alpha_{1+p}\alpha_{1}$, hence we want to show that the coefficients are equal for both. We then want to show:
\begin{align*}
& q^{(2ip-j-2ij-2il-2i^{2}+pl)}\lambda_{j,j+l}\lambda_{i,p-j-l-1}\lambda_{j+l+i,j+l+i+m}\lambda_{n,p-1-j-l-i-m}\\
=& q^{(jp-2il-2im-4ij-2i^{2}-i-2jl-2jm-2j^{2}-2jn-2j)}\lambda_{l,l+m}\lambda_{n+i+j,p-l-m-1}\lambda_{j,p-n-i-1}\lambda_{i,n+i}
\end{align*}
for $0\leq i\leq p-l-m-j-1$ and $0\leq n\leq p-l-m-j-1-i$. For this we need to use equation \ref{eq:A8}.
The coefficients then simplify to give:
\begin{align*}
&\frac{([j+l]!)([p-j-l-1]!)([j+l+i+m]!)([p-1-j-l-i-m]!)}{([p-j-l-i-1]!)([j+l+i]!)}\\
=& q^{(-2ip)}\frac{([l+m]!)([p-l-m-1]!)([p-n-i-1]!)([n+i]!)}{([n+i+j]!)([p-n-i-1-j]!)}
\end{align*}
As $[p-x]=[x]$, we have $([x]!)([p-1-x]!)=([p-1]!)$. Therefore it reduces to:
\begin{align*}
([p-1]!)=q^{-2ip}([p-1]!)
\end{align*}
Hence the coefficients are equal, and so we have $\alpha_{1}\alpha_{1+p}=\alpha_{1+p}\alpha_{1}$.

\subsection{Relation \ref{eq:7}, $\alpha\beta+\beta\alpha=\gamma \mathfrak{f}_{2p-1}$. \label{subsect 6}}
From Section \ref{subsect 3}, we have:
\begin{align*}
\beta(\alpha(\rho_{i_{1},...,i_{k},2p-1}))=& q^{\big(k(2p-1)-\frac{1}{2}(k^{2}-k)-(\sum\limits_{j=1}^{k}i_{j})\big)}\frac{([2p-k-1]!)}{[p]}F^{k}x_{0}, \:\: 0\leq k\leq p-1\\
\alpha(\beta(\rho_{i_{1},...,i_{k},2p-1}))=& q^{\big(k(2p-1)-\frac{1}{2}(k^{2}-k)-(\sum\limits_{j=1}^{k}i_{j})\big)}\frac{([k]!)}{[p]}E^{2p-k-1}x_{2p-1}, \:\: p\leq k\leq 2p-1
\end{align*}
The Jones-Wenzl projection, $\mathfrak{f}_{2p-1}$, is given by:
\begin{align*}
\mathfrak{f}_{2p-1}(\rho_{i_{1},...,i_{k},2p-1})=& q^{\big(k(2p-1)-\frac{1}{2}(k^{2}-k)-(\sum\limits_{j=1}^{k}i_{j})\big)}\frac{([2p-k-1]!)}{([2p-1]!)}F^{k}x_{0}\\
\gamma \mathfrak{f}_{2p-1}(\rho_{i_{1},...,i_{k},2p-1})=& q^{\big(k(2p-1)-\frac{1}{2}(k^{2}-k)-(\sum\limits_{j=1}^{k}i_{j})\big)}\frac{([2p-k-1]!)}{[p]}F^{k}x_{0}
\end{align*}
Hence $\beta(\alpha(\rho_{i_{1},...,i_{k},2p-1}))=\gamma \mathfrak{f}_{2p-1}(\rho_{i_{1},...,i_{k},2p-1})$, for $0\leq k\leq p-1$. Note, that although we should consider this equal to zero for $k\geq p$, here we are only using $\frac{F^{k}}{[p]}$ to represent the relevant element of $X^{\otimes(2p-1)}$, which is non-zero. From equations \ref{eq:A11} and \ref{eq:A12}, we have:
\begin{align*}
([z-k]!)F^{k}x_{0,z}=&([k]!)E^{z-k}x_{z,z}
\end{align*}
It follows that $\alpha(\beta(\rho_{i_{1},...,i_{k},2p-1}))=\gamma \mathfrak{f}_{2p-1}(\rho_{i_{1},...,i_{k},2p-1})$, for $p\leq k\leq 2p-1$. Hence $\alpha\beta+\beta\alpha=\gamma \mathfrak{f}_{2p-1}$.

\subsection{Relation \ref{eq:8}, $\alpha\cap_{i}=\cup_{i}\alpha=\beta\cap_{i}=\cup_{i}\beta=0, \:\: 1\leq i\leq 2p-2$.}
From Proposition \ref{prop 3}, we have that the image of $\alpha$ and $\beta$ is $\mathcal{X}^{-}_{p}$. However, $\mathcal{X}^{-}_{p}$ is irreducible, and does not appear in the decomposition of $X^{\otimes n}$ until $n=2p-1$. Capping or cupping $\alpha$ or $\beta$ gives a map between $\mathcal{X}^{-}_{p}$ and $X^{\otimes(2p-3)}$. Hence this map must be zero.

\subsection{Relations \ref{eq:9} and \ref{eq:10}, $\alpha_{i+1}\cap_{i}=\alpha_{i}\cap_{i+2p-2}$, $\beta_{i+1}\cap_{i}=\beta_{i}\cap_{i+2p-2}$.}

Given $\cap(\nu)=q^{-1}\nu_{10}-\nu_{01}$, we can write relation \ref{eq:9} explicitly as:
\begin{align*}
& q^{-1}\alpha(\rho_{i_{1},...,i_{n},2p-2}\otimes\nu_{1})\otimes\nu_{0}-\alpha(\rho_{i_{1},...,i_{n},2p-2}\otimes\nu_{0})\otimes\nu_{1}\\
=& q^{-1}\nu_{1}\otimes\alpha(\nu_{0}\otimes\rho_{i_{1},...,i_{n},2p-2})-\nu_{0}\otimes\alpha(\nu_{1}\otimes\rho_{i_{1},...,i_{n},2p-2})
\end{align*}
Using equation \ref{eq:A6}, this simplifies to become:
\begin{align*}
& q^{n-1}\lambda_{n,n+1}(E^{p-n-2}x_{2p-1,2p-1})\otimes\nu_{0}-q^{n}(E^{p-n-1}x_{2p-1,2p-1})\otimes\nu_{1}\\
=& q^{-1}\nu_{1}\otimes(E^{p-n-1}x_{2p-1,2p-1})-q^{2p-2}\lambda_{1,n+1}\nu_{0}\otimes(E^{p-n-2}x_{2p-1,2p-1})
\end{align*}
Using equation \ref{eq:A13} and \ref{eq:A14}, both sides can be shown to equal $q^{-1}E^{p-n-1}x_{2p,2p}$. The proof of relation \ref{eq:10} is similar, with both sides reducing to $q^{-1}F^{p-n-1}x_{0,2p}$.

\subsection{Relations \ref{eq:11}, \ref{eq:12}, \ref{eq:13}, and \ref{eq:14}, $\cup_{i}\alpha_{i+1}=\cup_{i+2p-2}\alpha_{i}$, $\cup_{i}\beta_{i+1}=\cup_{i+2p-2}\beta_{i}$, $R_{4p-2}(\alpha)=\alpha$, $R_{4p-2}(\beta)=\beta$.}
These can be proven diagrammatically from relations \ref{eq:9} and \ref{eq:10}. We demonstrate the proof for $\alpha$, with $\beta$ following similarly.
\begin{figure}[H]
	\centering
	\includegraphics[width=0.7\linewidth]{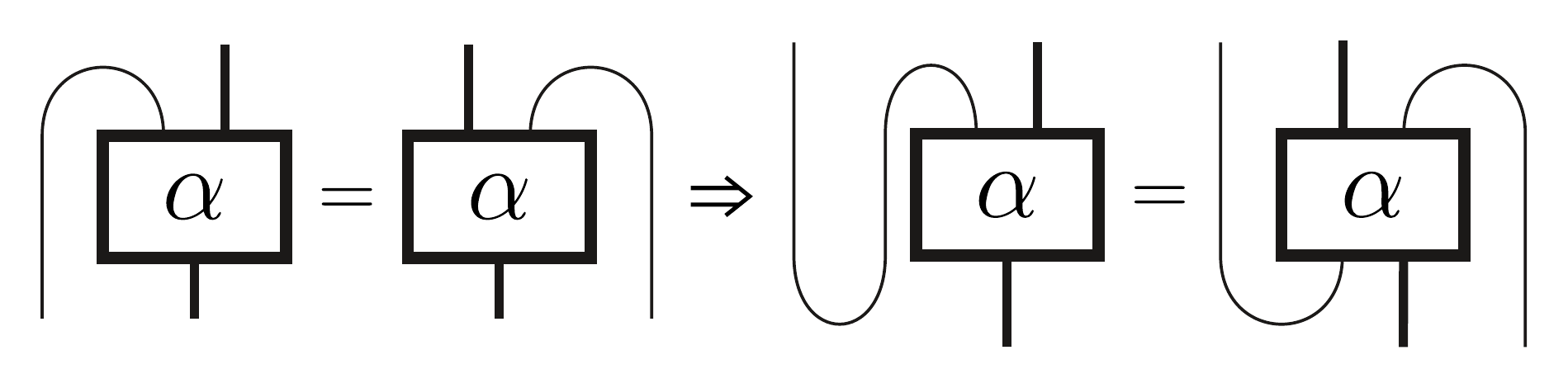}
	\includegraphics[width=0.35\linewidth]{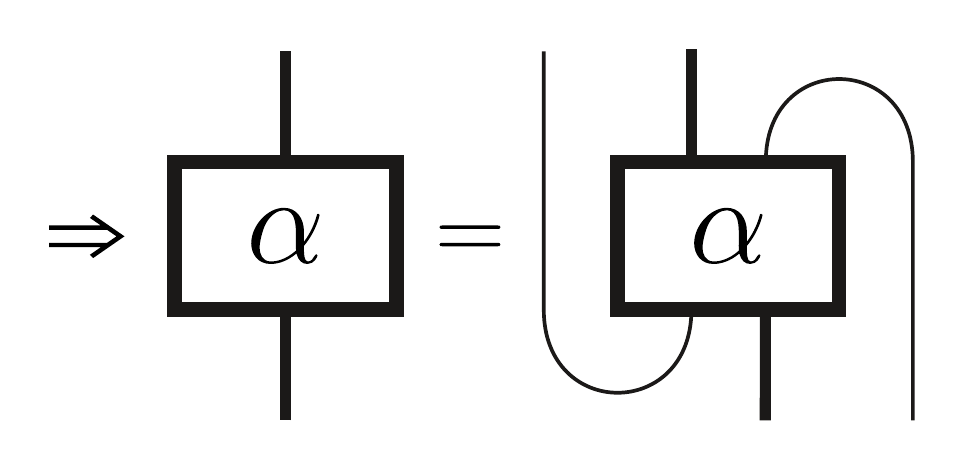}
\end{figure}
Hence $\alpha$ is rotation invariant. Continuing with this, we have:
\begin{figure}[H]
	\centering
	\includegraphics[width=0.7\linewidth]{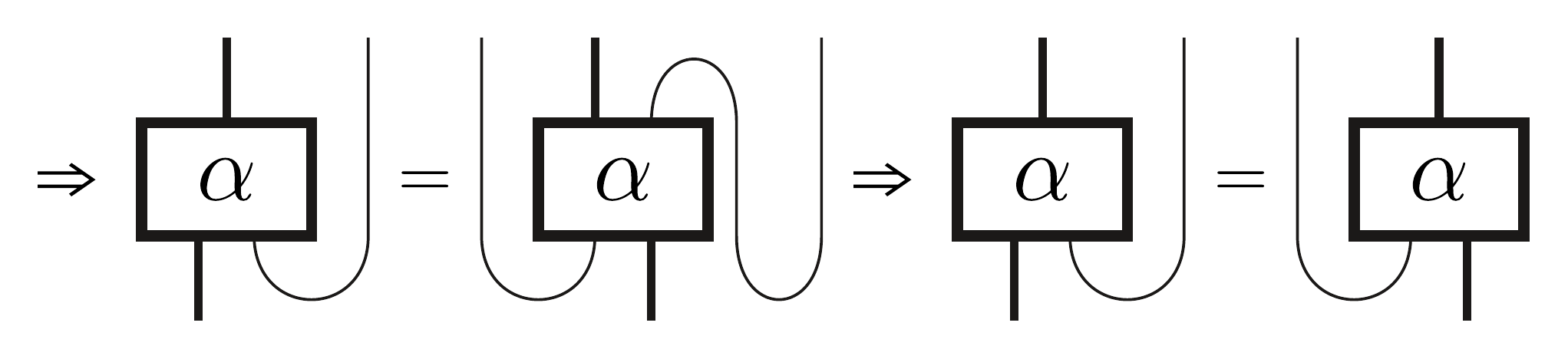}
\end{figure}
and so we have the cupping relation.

\subsection{Partial Traces}
The partial trace of $\alpha$ and $\beta$ can be derived easily from the cupping and capping relations (relations \ref{eq:9} - \ref{eq:12}), and shown to be zero. We demonstrate the case for alpha:
\begin{figure}[H]
	\centering
	\includegraphics[width=0.7\linewidth]{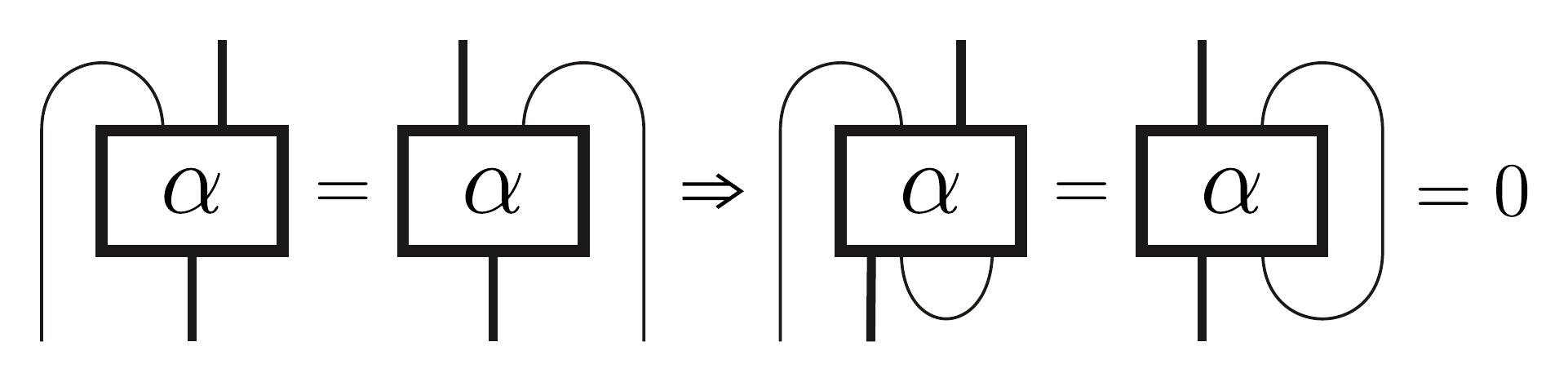}
\end{figure}
The partial trace of $\alpha\beta$ and $\beta\alpha$ is given in Figure \ref{fig:pt}. Note that the left partial trace follows from the right partial trace and the cupping and capping relations. We give the proof for $\beta\alpha$, with the proof for $\alpha\beta$ following similarly.\\
\\
Consider $X^{\otimes 2p-2}\otimes\cap$. Its elements take the form $q^{-1}\rho_{i_{1},...,i_{n},2p-2}\otimes\nu_{10}-\rho_{i_{1},...,i_{n},2p-2}\otimes\nu_{01}$. We want to apply $(\beta\alpha\otimes 1)$ to this. From Section \ref{subsect 3}, we had that given $x\in X_{k,2p-1}$, $0\leq k\leq p-1$,
\begin{align*}
\beta(\alpha(x))&=e_{x}\frac{([2p-k-1]!)}{([k]!)[p]}F^{k}x_{0,2p-1}
\end{align*}
We then have:
\begin{align*}
\beta(\alpha(\rho_{i_{1},...,i_{n},2p-2}\otimes\nu_{1}))=& q^{\big(n(2p-1)-\frac{1}{2}(n^{2}-n)-(\sum\limits_{j=1}^{n}i_{j})\big)}\lambda_{n,n+1}\frac{([2p-n-2]!)}{[n+1][p]}\times\\
&\times\bigg((F^{n+1}x_{0,2p-2})\otimes\nu_{0}+q^{n-2p+2}[n+1](F^{n}x_{0,2p-2})\otimes\nu_{1}\bigg)\\
\beta(\alpha(\rho_{i_{1},...,i_{n},2p-2}\otimes\nu_{0}))=& q^{\big(n(2p-1)-\frac{1}{2}(n^{2}-n)-(\sum\limits_{j=1}^{n}i_{j})\big)}\frac{([2p-n-1]!)}{[p]}\times\\
&\times\bigg((F^{n}x_{0,2p-2})\otimes\nu_{0}+q^{n-2p+1}[n](F^{n-1}x_{0,2p-2})\otimes\nu_{1}\bigg)
\end{align*}
Note that as $\rho_{i_{1},...,i_{n},2p-2}\otimes\nu_{1}\in X_{n+1,2p-1}$, and $\alpha$ is zero on $X_{n,2p-1}$ for $n\geq p$, we will have to treat the case $n=p-1$ separately. For $0\leq n\leq p-2$, we have:\\ 

$(\beta\alpha\otimes 1)(q^{-1}\rho_{i_{1},...,i_{n},2p-2}\otimes\nu_{10}-\rho_{i_{1},...,i_{n},2p-2}\otimes\nu_{01})$
\begin{align*}
=& q^{\big(n(2p-1)-\frac{1}{2}(n^{2}-n)-1-(\sum\limits_{j=1}^{n}i_{j})\big)}\lambda_{n,n+1}\frac{([2p-n-2]!)}{[n+1][p]}\times\\
&\times\bigg((F^{n+1}x_{0,2p-2})\otimes\nu_{00}+q^{n-2p+2}[n+1](F^{n}x_{0,2p-2})\otimes\nu_{10}\bigg)\\
&-q^{\big(n(2p-1)-\frac{1}{2}(n^{2}-n)-(\sum\limits_{j=1}^{n}i_{j})\big)}\frac{([2p-n-1]!)}{[p]}\bigg((F^{n}x_{0,2p-2})\otimes\nu_{01}+q^{n-2p+1}[n](F^{n-1}x_{0,2p-2})\otimes\nu_{11}\bigg)
\end{align*}
Applying $1^{\otimes 2p-2}\otimes\cup$, this simplifies to give zero. For the case $n=p-1$, as $\rho_{i_{1},...,i_{p-1},2p-2}\otimes\nu_{1}\in X_{p,2p-1}$, we have\\

$(\beta\alpha\otimes 1)(q^{-1}\rho_{i_{1},...,i_{p-1},2p-2}\otimes\nu_{10}-\rho_{i_{1},...,i_{p-1},2p-2}\otimes\nu_{01})$
\begin{align*}
=&-q^{\big((p-1)(2p-1)-\frac{1}{2}(p-1)(p-2)-(\sum\limits_{j=1}^{p-1}i_{j})\big)}\frac{([p]!)}{[p]}\bigg((F^{p-1}x_{0,2p-2})\otimes\nu_{01}+q^{-p}[p-1](F^{p-2}x_{0,2p-2})\otimes\nu_{11}\bigg)
\end{align*}
Applying $1^{\otimes 2p-2}\otimes\cup$ to this, we get:
\begin{align*}
q^{\big((p-1)(2p-1)+1-\frac{1}{2}(p-1)(p-2)-(\sum\limits_{j=1}^{p-1}i_{j})\big)}([p-1]!)F^{p-1}x_{0,2p-2}
\end{align*}
We now want to give an explicit formula for the following diagram:
\begin{figure}[H]
	\centering
	\includegraphics[width=0.2\linewidth]{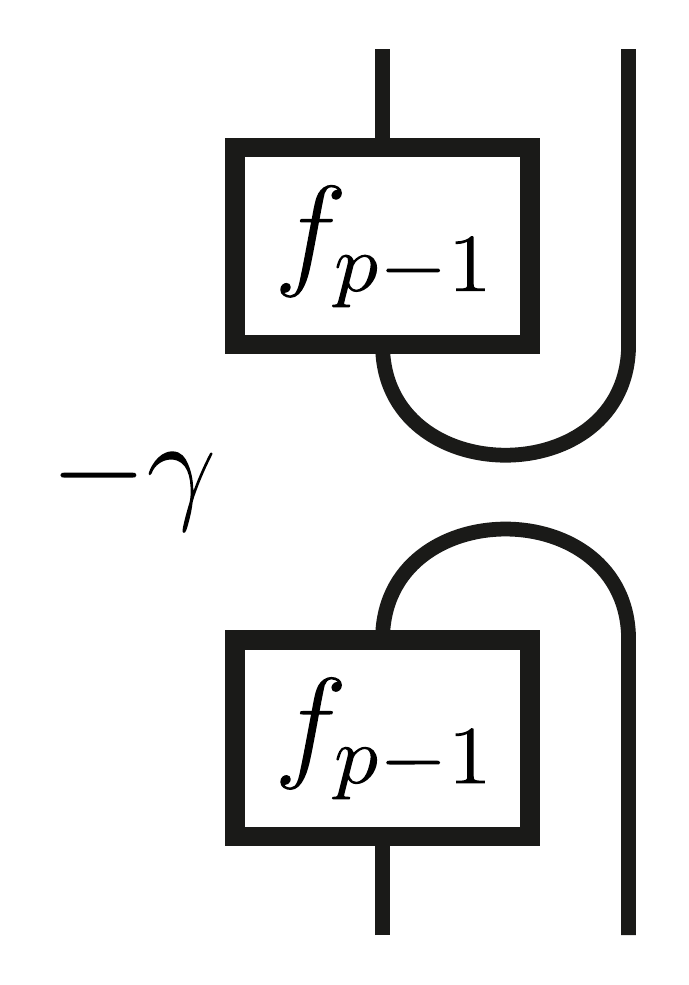}
\end{figure}
Given $\rho_{i_{1},...,i_{m},i_{m+1},...,i_{n},2p-2}$, we can rewrite it as $\rho_{i_{1},...,i_{m},p-1}\otimes\rho_{i_{m+1}+1-p,...,i_{n}+1-p,p-1}$. Applying $\mathfrak{f}_{p-1}\otimes 1^{\otimes p-1}$ to this we get:
\begin{align*}
& q^{\big(m(p-1)-\frac{1}{2}(m^{2}-m)-(\sum\limits_{j=1}^{m}i_{j})\big)}\frac{([p-1-m]!)}{([p-1]!)}(F^{m}x_{0,p-1})\otimes\rho_{i_{m+1}+1-p,...,i_{n}+1-p,p-1}\\
=& \sum\limits_{k_{1},...,k_{m}}q^{\big(mp-(\sum\limits_{j=1}^{m}i_{j})-(\sum\limits_{l=1}^{m}k_{l})\big)}\frac{([p-1-m]!)([m]!)}{([p-1]!)}\rho_{k_{1},...,k_{m},p-1}\otimes\rho_{i_{m+1}+1-p,...,i_{n}+1-p,p-1}
\end{align*}
Given $\cup(\nu_{10})=\nu$, $\cup(\nu_{01})=-q\nu$, $\cup(\nu_{00})=\cup(\nu_{11})=0$, applying cups repeatedly to this, we get zero if $n\neq p-1$ or if $\{k_{1},...,k_{m}\}\cap\{2p-1-i_{m+1},...,2p-1-i_{n}\}\neq\emptyset$. If $n=p-1$ and $\{k_{1},...,k_{m}\}\cap\{2p-1-i_{m+1},...,2p-1-i_{n}\}=\emptyset$, then we have:
\begin{align*}
&\sum\limits_{k_{1},...,k_{m}}(-1)^{p-1-m}q^{\big(mp+p-1-m-(\sum\limits_{j=1}^{m}i_{j})-(\sum\limits_{l=1}^{m}k_{l})\big)}\frac{([p-1-m]!)([m]!)}{([p-1]!)}\nu\\
=&\sum\limits_{k_{1},...,k_{m}}(-1)^{p-1}q^{\big(p-1-m-(\sum\limits_{j=1}^{m}i_{j})-(\sum\limits_{l=1}^{m}k_{l})\big)}\nu
\end{align*}
Note that for each choice of $i_{m+1},...,i_{n}$, there is a unique choice of $k_{1},...,k_{m}$ satisfying the above conditions, i.e. $\{k_{1},...,k_{m},2p-1-i_{m+1},...,2p-1-i_{n}\}=\{1,...,p-1\}$, and so we have that:
\begin{align*}
\sum\limits_{l=1}^{m}k_{l}=&\frac{1}{2}(p^{2}-p)-(2p-1)(n-m)+\sum\limits_{r=m+1}^{n}i_{r}
\end{align*}
Using this, we can simplify to get:
\begin{align*}
&\sum\limits_{k_{1},...,k_{m}}(-1)^{p-1}q^{\big(p-1-m-(\sum\limits_{j=1}^{m}i_{j})-(\sum\limits_{l=1}^{m}k_{l})\big)}\nu\\
=&(-1)^{p-1}q^{\big(p-1-m-(\sum\limits_{j=1}^{m}i_{j})-\frac{1}{2}(p^{2}-p)+(2p-1)(n-m)-(\sum\limits_{r=m+1}^{n}i_{r})\big)}\nu\\
=&(-1)^{p-1}q^{\big(-\frac{1}{2}(p^{2}-p)-(\sum\limits_{j=1}^{n}i_{j})\big)}\nu
\end{align*}
Given $\rho_{r_{1},...,r_{n},z}$, let $\tilde{r}_{1},...,\tilde{r}_{z-n}$ be the positions of the zeros. As $\cap(\nu)=q^{-1}\nu_{10}-\nu_{01}$, we have that the $z$-fold cap is given by:
\begin{align*}
\sum\limits_{n=0}^{z}\bigg(\sum\limits_{r_{1},...,r_{n}}(-1)^{z-n}q^{-n}\rho_{r_{1},...,r_{n},2z+1-\tilde{r}_{z-n},...,2z+1-\tilde{r}_{1},2z}\bigg)
\end{align*}
Taking $z=p-1$, this becomes:
\begin{align*}
\sum\limits_{n=0}^{p-1}\bigg(\sum\limits_{r_{1},...,r_{n}}(-1)^{p-1-n}q^{-n}\rho_{r_{1},...,r_{n},2p-1-\tilde{r}_{p-1-n},...,2p-1-\tilde{r}_{1},2p-2}\bigg)
\end{align*}
Applying $\mathfrak{f}_{p-1}\otimes 1^{\otimes p-1}$ to this we get:
\begin{align*}
&\sum\limits_{n=0}^{p-1}\bigg(\sum\limits_{r_{1},...,r_{n}}(-1)^{p-1-n}q^{\big(n(p-2)-\frac{1}{2}(n^{2}-n)-(\sum\limits_{j=1}^{n}r_{j})\big)}\times\\
&\times\frac{([p-1-n]!)}{([p-1]!)}(F^{n}x_{0,p-1})\otimes\rho_{p-\tilde{r}_{p-1-n},...,p-\tilde{r}_{1},p-1}\bigg)\\
=&\sum\limits_{n=0}^{p-1}\bigg(\sum\limits_{r_{1},...,r_{n}}\sum\limits_{s_{1},...,s_{n}}(-1)^{p-1-n}q^{\big(n(p-1)-(\sum\limits_{j=1}^{n}r_{j})-(\sum\limits_{k=1}^{n}s_{k})\big)}\times\\
&\times\frac{([p-1-n]!)([n]!)}{([p-1]!)}\rho_{s_{1},...,s_{n},p-1}\otimes\rho_{p-\tilde{r}_{p-1-n},...,p-\tilde{r}_{1},p-1}\bigg)
\end{align*}
As $\sum\limits_{j=1}^{n}r_{j}+\sum\limits_{l=1}^{p-1-n}\tilde{r}_{l}=\frac{1}{2}(p^{2}-p)$, this becomes:
\begin{align*}
&\sum\limits_{n=0}^{p-1}\bigg(\sum\limits_{\tilde{r}_{1},...,\tilde{r}_{p-1-n}}\sum\limits_{s_{1},...,s_{n}}(-1)^{p-1-n}q^{\big(n(p-1)-\frac{1}{2}(p^{2}-p)+(\sum\limits_{j=1}^{p-1-n}\tilde{r}_{j})-(\sum\limits_{k=1}^{n}s_{k})\big)}\times\\
&\times\frac{([p-1-n]!)([n]!)}{([p-1]!)}\rho_{s_{1},..,s_{n},2p-1-\tilde{r}_{p-1-n},...,2p-1-\tilde{r}_{1},2p-2}\bigg)\\
=&\sum\limits_{n=0}^{p-1}\bigg(\sum\limits_{t_{1},...,t_{p-1}}(-1)^{p-1-n}q^{\big(n(p-1)-\frac{1}{2}(p^{2}-p)+(2p-1)(p-1-n)-(\sum\limits_{l=1}^{p-1}t_{l})\big)}\rho_{t_{1},...,t_{p-1},2p-2}\bigg)\\
=&\sum\limits_{n=0}^{p-1}\bigg(\sum\limits_{t_{1},...,t_{p-1}}(-1)^{p-1}q^{\big(1-p-\frac{1}{2}(p^{2}-p)-(\sum\limits_{l=1}^{p-1}t_{l})\big)}\rho_{t_{1},...,t_{p-1},2p-2}\bigg)\\
=&\sum\limits_{t_{1},...,t_{p-1}}(-1)^{p-1}q^{\big(1-p-\frac{1}{2}(p^{2}-p)-(\sum\limits_{l=1}^{p-1}t_{l})\big)}\rho_{t_{1},...,t_{p-1},2p-2}\\
=& q^{(1-p^{2})}\frac{(-1)^{p-1}}{([p-1]!)}F^{p-1}x_{0,2p-2}
\end{align*}
Where we have taken $t_{1}:=s_{1},...,t_{n}:=s_{n}$, $t_{n+1}:=2p-1-\tilde{r}_{p-1-n},...,t_{p-1}:=2p-1-\tilde{r}_{1}$.
Combining this with the first part we get:
\begin{align*}
& q^{\big(1-p^{2}-\frac{1}{2}(p^{2}-p)-(\sum\limits_{j=1}^{n}i_{j})\big)} \frac{(-1)^{2p-2}}{([p-1]!)}F^{p-1}x_{0,2p-2}
\end{align*}
Multiplying by $-\gamma$, this becomes:
\begin{align*}
& (-1)^{p}q^{\big(1-p^{2}-\frac{1}{2}(p^{2}-p)-(\sum\limits_{j=1}^{n}i_{j})\big)}([p-1]!)F^{p-1}x_{0,2p-2}
\end{align*}
which is equal to the partial trace of $\beta\alpha$.\\
This map is also the second (non-identity) endomorphism on $\mathcal{P}^{+}_{1}$. Details of this endomorphism for all $\mathcal{P}^{\pm}_{i}$, as well as homomorphisms between indecomposable modules, will be discussed in a future paper \cite{Me2}.

\subsection{Relations \ref{eq:15} and \ref{eq:16}, $\sum\limits_{i=0}^{4p-1} k_{i}R^{i}_{4p}(\alpha\otimes 1)=0$, $\sum\limits_{i=0}^{4p-1} k_{i}R^{i}_{4p}(\beta\otimes 1)=0$.}
Let $R_{n}$ denote the clockwise rotation tangle acting on $n$ points.
\begin{figure}[H]
	\centering
	\includegraphics[width=0.2\linewidth]{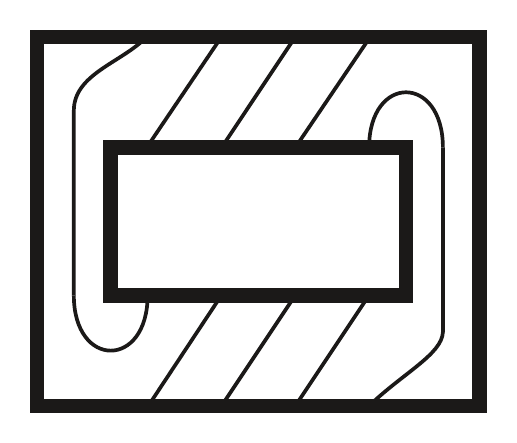}
	\caption{The rotation tangle $R_{8}$.}
\end{figure}
We want to show the relations $\sum\limits_{i=0}^{4p-1} k_{i}R^{i}_{4p}(\alpha\otimes 1)=0$, $\sum\limits_{i=0}^{4p-1} k_{i}R^{i}_{4p}(\beta\otimes 1)=0$. These in turn allow the proof of a large number of other relations, and simplification of diagrams containing $\alpha$ and $\beta$. The proof consists of two steps; first we show that the diagrams $\{R_{4p}^{i}\alpha\}$, $\{R^{i}_{4p}\beta\}$ are linearly dependent, so that the coefficients can be non-zero. We then give a general solution for the coefficients.\\

We demonstrate the proof for $\alpha$, with the proof for $\beta$ following similarly. Diagrammatically, $\sum\limits_{i=0}^{4p-1} k_{i}R^{i}_{4p}(\alpha\otimes 1)=0$ is given by Figure \ref{fig:rot}. From this, we see that there are $4p$ different diagrams, each of which acts on weight spaces by $X_{k,2p}\mapsto X_{k+p,2p}$. We want to show that the total number of maps acting as $X_{k,2p}\mapsto X_{k+p,2p}$ is less than $4p$. A list of module maps was given in Section \ref{Uq}, and from this we see that all module maps act as $X_{k,2p}\mapsto X_{k,2p}$, except for maps between $\mathcal{P}^{+}_{s}$ and $\mathcal{P}^{-}_{p-s}$ (As $\mathcal{X}^{-}_{p}$ does not appear in the decomposition of $X^{\otimes 2p}$). For example, for $p=3$, the maps not preserving weight spaces are:
\begin{figure}[H]
	\centering
	\includegraphics[width=0.6\linewidth]{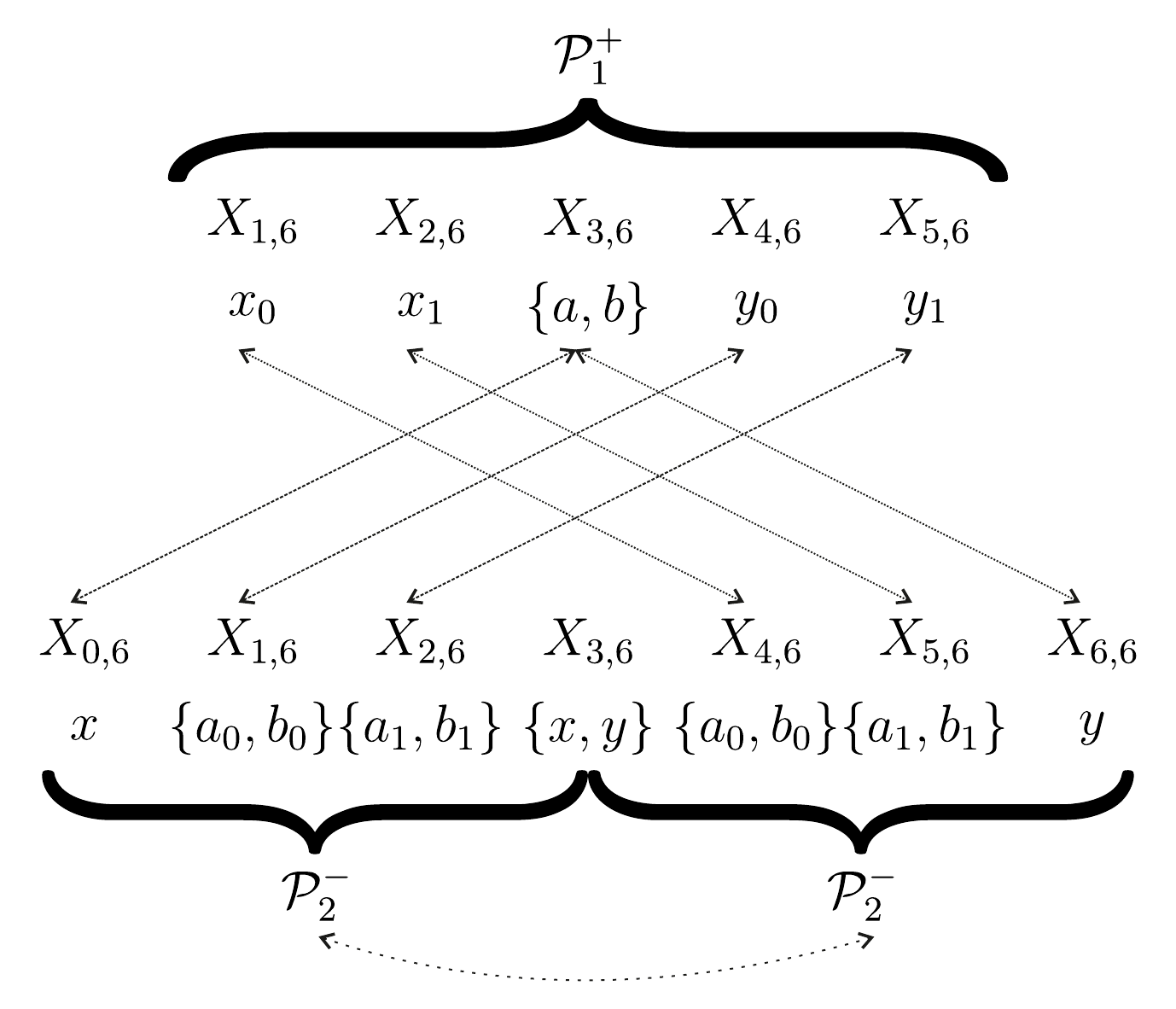}
\end{figure}
Hence there are two maps acting as $X_{k,2p}\mapsto X_{k+p,2p}$, and two acting as $X_{k,2p}\mapsto X_{k-p,2p}$. Denote the multiplicity of $\mathcal{P}^{+}_{1}$ in $X^{\otimes 2p}$ by $M(\mathcal{P}^{+}_{1})$. From this, we see that the total number of maps acting as $X_{k,2p}\mapsto X_{k+p,2p}$ is $2M(\mathcal{P}^{+}_{1})+2$. By considering module decompositions, we find that $M(\mathcal{P}^{+}_{1})=2p-2$. Hence the diagrams $\{R_{4p}^{i}\alpha\}$ are linearly dependent.\\

Consider applying $\cap_{2p-1}$ to $\sum\limits_{i=0}^{4p-1} k_{i}R^{i}_{4p}(\alpha\otimes 1)=0$. As capping off $\alpha$ gives zero, this reduces to $(k_{1}+\delta k_{4p}+k_{4p-1})\alpha_{1}\cap_{2p-1}=0$, so $k_{1}+\delta k_{4p}+k_{4p-1}=0$. Repeating this at every position, we get the general condition $k_{i-1}+\delta k_{i}+k_{i+1}=0$. Using this, we can rewrite the coefficients as:
\begin{align*}
k_{i}=& (-1)^{i}[i-2]k_{1}+(-1)^{i}[i-1]k_{2},& k_{i+p}=&(-1)^{p+1}k_{i}
\end{align*}
W denote $P_{\alpha}:=\sum\limits_{i=0}^{4p-1} k_{i}R^{i}_{4p}(\alpha\otimes 1)$, $P_{\beta}:=\sum\limits_{i=0}^{4p-1} k_{i}R^{i}_{4p}(\beta\otimes 1)$. These generalize a large number of relations. For example, consider $\beta_{2}P_{\alpha}\beta_{2}$, with $k_{1}=1$, $k_{2}=0$. This gives:
\begin{figure}[H]
	\centering
	\includegraphics[width=0.25\linewidth]{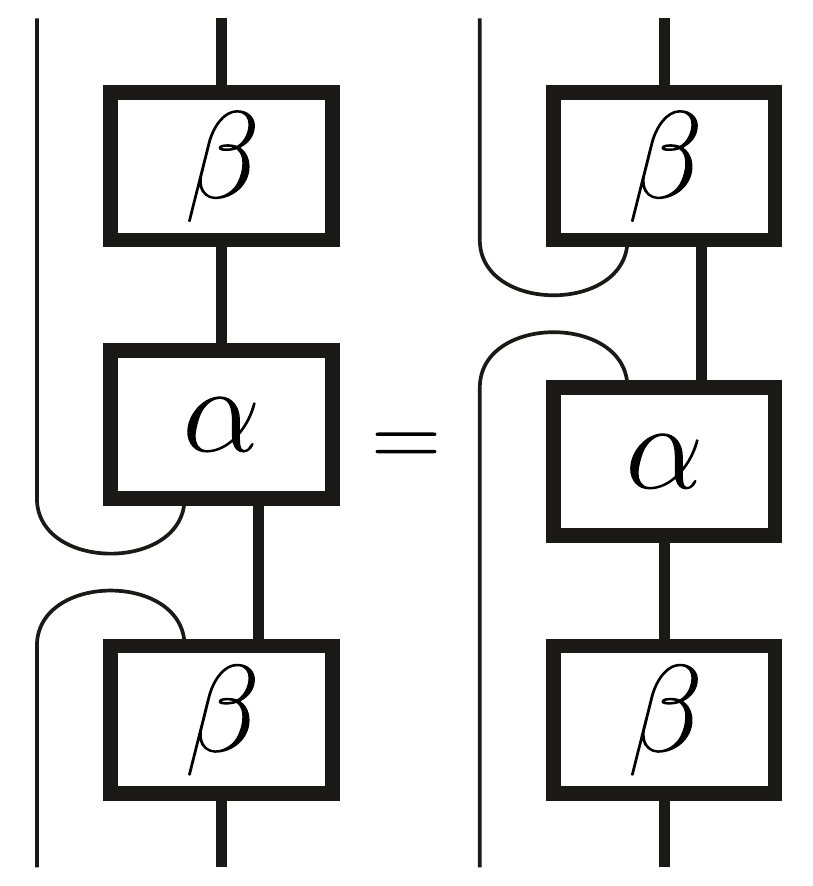}
\end{figure}
Considering $\alpha_{1}\beta_{1}P_{\alpha_{1}}$ with $k_{1}=1$, $k_{2}=0$, gives $k_{i}=(-1)^{i}[i-2]$ and:
\begin{figure}[H]
	\centering
	\includegraphics[width=0.7\linewidth]{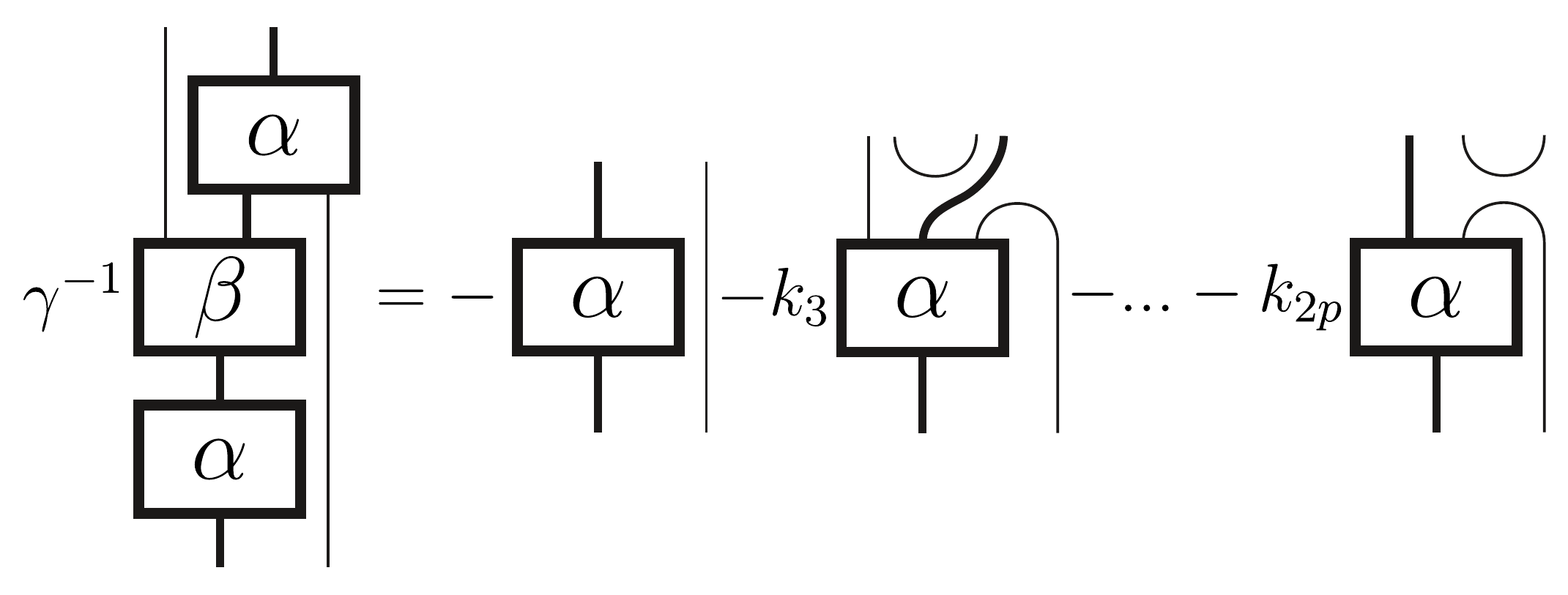}
\end{figure}
whereas $\alpha_{1}\beta_{1}P_{\alpha_{1}}$ with $k_{1}=0$, $k_{2}=1$, gives $k_{i}=(-1)^{i}[i-1]$ and:
\begin{figure}[H]
	\centering
	\includegraphics[width=0.7\linewidth]{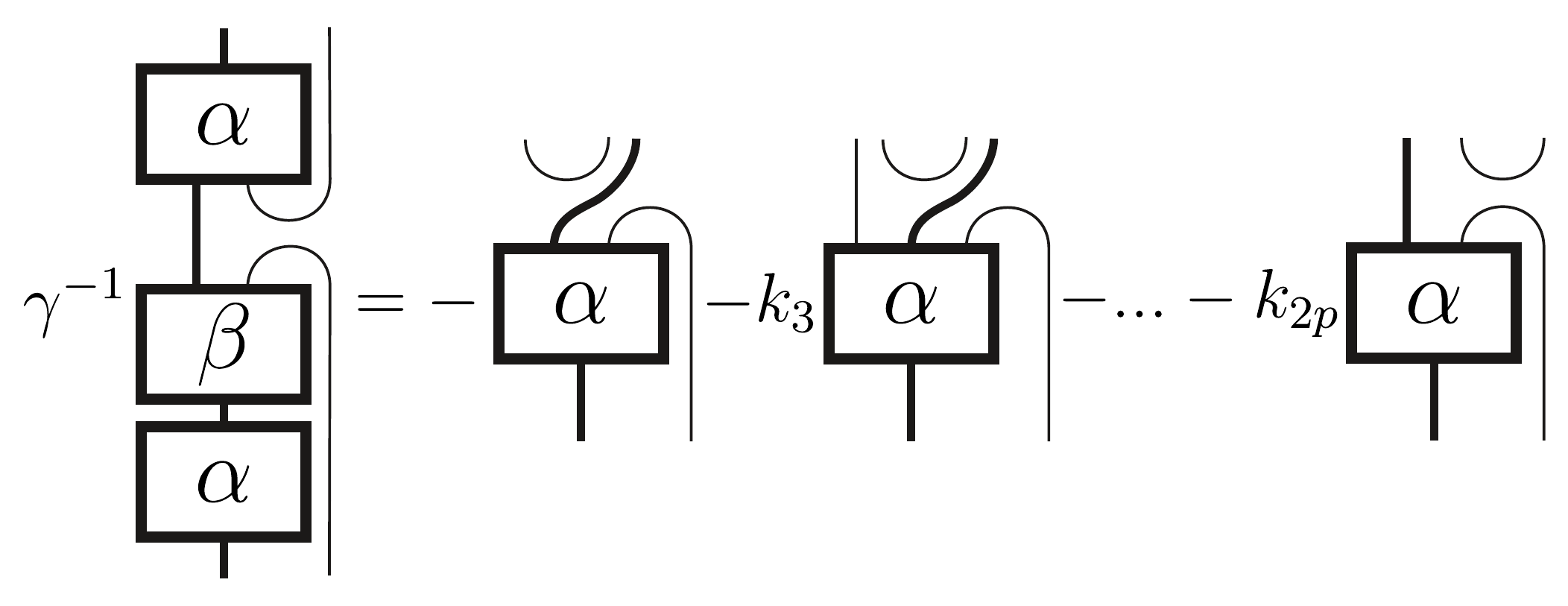}
\end{figure}
$P_{\alpha}$ and $P_{\beta}$ can also be used to reduce compositions of $\alpha$ and $\beta$. For example, consider $\beta_{3}(P_{\alpha_{1}}\otimes 1)$ with $k_{1}=1$, $k_{2p-1}=0$, which gives $k_{2}=-\delta^{-1}[3]$, $k_{i}=(-1)^{i+1}\delta^{-1}[3][i-1]+(-1)^{i}[i-2]$ and:
\begin{figure}[H]
	\centering
	\includegraphics[width=1\linewidth]{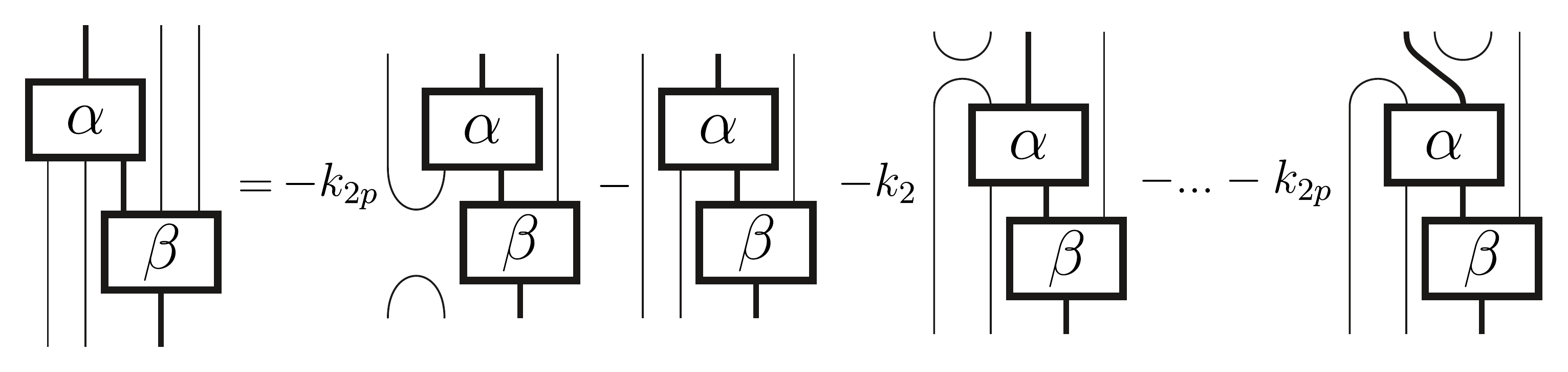}
\end{figure}
Note that this can not be used to reduce diagrams of the form $\alpha_{1}\beta_{p+1}$, $\beta_{1}\alpha_{p+1}$, etc. These diagrams describe maps between copies of $\mathcal{X}^{+}_{p}$ in $X^{\otimes(3p-1)}$.\\

Relations \ref{eq:15} and \ref{eq:16} can be taken as stating that at most $4p-2$ of the diagrams appearing in the relations are linearly independent, depending on the choice of the remaining two diagrams. Given that the dimension $D_{2p}=C_{2p}+12p-6$, we can use this to state the following:
\begin{prop}
$End_{\bar{U}_{q}(\mathfrak{sl}_{2})}(X^{\otimes 2p})$ has basis
\begin{align*}
\{&TL_{2p},\alpha_{1},\alpha_{2},\alpha_{2}e_{1},\alpha_{2}e_{1}e_{2},...,\alpha_{2}e_{1}e_{2}...e_{2p-2},e_{1}\alpha_{2},e_{2}e_{1}\alpha_{2},...,e_{2p-2}e_{2p-3}...e_{1}\alpha_{2},\\
&\beta_{1},\beta_{2},\beta_{2}e_{1},\beta_{2}e_{1}e_{2},...,\beta_{2}e_{1}e_{2}...e_{2p-2},e_{1}\beta_{2},e_{2}e_{1}\beta_{2},...,e_{2p-2}e_{2p-3}...e_{1}\beta_{2},\\
&\alpha_{1}\beta_{1},\alpha_{2}\beta_{2},\alpha_{2}\beta_{2}e_{1},\alpha_{2}\beta_{2}e_{1}e_{2},...,\alpha_{2}\beta_{2}e_{1}e_{2}...e_{2p-2},e_{1}\alpha_{2}\beta_{2},e_{2}e_{1}\alpha_{2}\beta_{2},...,e_{2p-2}e_{2p-3}...e_{1}\alpha_{2}\beta_{2}\}
\end{align*}
\end{prop}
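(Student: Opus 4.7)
The plan is to deduce the basis from the dimension formula $D_{2p} = C_{2p} + 12p - 6$ stated just before the proposition, together with the rotational relations \ref{eq:15}, \ref{eq:16} and the TL--$\alpha/\beta$ commutation relations \ref{eq:17}--\ref{eq:21}. First I would verify the cardinality: Temperley--Lieb contributes $C_{2p}$ diagrams, and each of the three families containing $\alpha$, $\beta$, or $\alpha\beta$ contains exactly $4p-2$ elements (two ``central'' elements $\alpha_1,\alpha_2$, plus $2(2p-2)$ obtained by prepending or appending a chain $e_1 e_2 \cdots e_k$ to $\alpha_2$). Summing gives $C_{2p} + 3(4p-2) = D_{2p}$, so once we establish that the listed set spans, linear independence will follow automatically.

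Next I would show spanning. An arbitrary element of $End_{\bar{U}_q(\mathfrak{sl}_2)}(X^{\otimes 2p})$ is a linear combination of planar diagrams built from through-strings, cups, caps, and copies of $\alpha$ and $\beta$. Relation \ref{eq:4} kills any diagram containing two occurrences of $\alpha$ (or of $\beta$) within distance less than $p$, and on $2p$ strands this rules out all diagrams with more than one $\alpha$ or more than one $\beta$. Hence only four combinatorial classes survive: pure TL diagrams, one-$\alpha$ diagrams, one-$\beta$ diagrams, and diagrams with exactly one $\alpha$ and one $\beta$, which Section \ref{subsect 3} collapses to the $\alpha\beta$-type. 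For a one-$\alpha$ diagram I would use \ref{eq:17}--\ref{eq:19} to move all TL generators onto a single side of the $\alpha$-box and \ref{eq:11}--\ref{eq:14} to slide any remaining cups and caps past $\alpha$, reducing the diagram to one of the $4p$ rotations $R^{i}_{4p}(\alpha \otimes 1)$. The rotational constraint \ref{eq:15} is a two-parameter family of linear relations among these $4p$ rotations, so at most $4p-2$ of them are linearly independent; the listed elements serve as canonical representatives. The $\beta$ and $\alpha\beta$ families are treated identically using \ref{eq:16} and the analogous commutation relations.

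The main obstacle is the reduction step: one must verify that every ``wrapping'' of Temperley--Lieb generators and cup/cap tangles around a single $\alpha$-box really collapses to a one-sided attachment of a chain $e_1 e_2 \cdots e_k$ to $\alpha_1$ or $\alpha_2$. I expect this to follow by induction on the number of non-through strands adjacent to $\alpha$: the base case is relation \ref{eq:8} (cups and caps directly attached to $\alpha$ vanish), and the inductive step uses \ref{eq:17}--\ref{eq:19} to commute a TL generator across $\alpha$ at the cost of introducing such a chain on the opposite side. A cleaner alternative, bypassing this combinatorial normal-form argument, would be to prove linear independence of the listed $4p-2$ elements in each family directly by evaluating them on well-chosen vectors $\rho_{i_1,\ldots,i_n,2p}$ using the explicit $\bar{U}_q(\mathfrak{sl}_2)$-action computed in the preceding sections, and then invoke the dimension count in reverse to conclude spanning. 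Either route terminates with cardinality $D_{2p}$ matching $\dim End_{\bar{U}_q(\mathfrak{sl}_2)}(X^{\otimes 2p})$, at which point spanning and independence are equivalent and the stated set is a basis.
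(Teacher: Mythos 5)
Your proposal is correct and follows essentially the same route as the paper: the paper likewise obtains the proposition from the cardinality count $C_{2p}+3(4p-2)=D_{2p}$ combined with relations \ref{eq:15} and \ref{eq:16}, which cut the $4p$ rotations of $\alpha\otimes 1$ (resp. $\beta\otimes 1$, $\alpha\beta$) down to $4p-2$ independent representatives, the listed chains $\alpha_{2}e_{1}\cdots e_{k}$ and $e_{k}\cdots e_{1}\alpha_{2}$ being exactly those representatives via the cupping/capping relations. The normal-form reduction you identify as the main obstacle is left implicit in the paper as well, which states the result directly from the dimension formula without carrying out that combinatorial step.
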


\titleformat{\section}{\normalfont\Large\bfseries}{\appendixname~\thesection.}{1em}{}
\begin{appendices}
	\numberwithin{equation}{section}
\section{Combinatorial Relations.}
There are a number of generalized relations for $\bar{U}_{q}(\mathfrak{sl}_{2})$ and its action on $X^{\otimes n}$, which we record here. The quantum group $\bar{U}_{q}(\mathfrak{sl}_{2})$ and its relations can be used to give the following generalized conditions:
\begin{align}
\Delta^{k}(K)&=K^{\otimes k+1} \label{eq:A1}\\
\Delta^{k}(E)&=\sum\limits_{i=0}^{k}(1^{\otimes i})\otimes E\otimes (K^{\otimes (k-i)}) \label{eq:A2}\\
\Delta^{k}(F)&=\sum\limits_{i=0}^{k}\big((K^{-1})^{\otimes i}\big)\otimes F\otimes(1^{\otimes (k-i)}) \label{eq:A3}\\
EF^{k}&=F^{k}E+(\frac{[k]}{q-q^{-1}})\big(q^{1-k}F^{k-1}K-q^{k-1}F^{k-1}K^{-1}\big) \label{eq:A4}\\
FE^{k}&=E^{k}F+(\frac{[k]}{q-q^{-1}})\big(q^{1-k}E^{k-1}K^{-1}-q^{k-1}E^{k-1}K\big) \label{eq:A5}\\
\Delta E^{k}&=\sum\limits_{i=0}^{k}\lambda_{i,k}E^{i}\otimes K^{i}E^{k-i} \label{eq:A6}\\
\Delta F^{k}&=\sum\limits_{i=0}^{k}\lambda_{i,k}K^{-i}F^{k-i}\otimes F^{i} \label{eq:A7}\\
\lambda_{i,k}&=q^{(i^{2}-ik)}\frac{([k]!)}{([i]!)([k-i]!)} \label{eq:A8}
\end{align}
The $\bar{U}_{q}(\mathfrak{sl}_{2})$ action on the basis elements satisfies the following:
\begin{align}
E^{n}\rho_{i_{1},...,i_{n},z}&=q^{\big(nz-\frac{1}{2}(n^{2}-n)-(\sum\limits_{j=1}^{n}i_{j})\big)}([n]!)x_{0,z} \label{eq:A9}\\
F^{z-n}\rho_{i_{1},...,i_{n},z}&=q^{\big(nz-\frac{1}{2}(n^{2}-n)-(\sum\limits_{j=1}^{n}i_{j})\big)}([z-n]!)x_{z,z} \label{eq:A10}\\
F^{k}x_{0,z}&=\sum\limits_{1\leq i_{j}\leq z}q^{\big(\frac{1}{2}(k^{2}+k)-(\sum\limits_{j=1}^{k}i_{j})\big)}([k]!)\rho_{i_{1},...,i_{k},z} \label{eq:A11}\\
E^{k}x_{z,z}&=\sum\limits_{1\leq i_{j}\leq z}q^{\big(\frac{1}{2}(z-k)(z-k+1)-(\sum\limits_{j=1}^{z-k}i_{j})\big)}([k]!)\rho_{i_{1},...,i_{z-k},z} \label{eq:A12}\\
E^{k}x_{z+1,z+1}&=[k](E^{k-1}x_{z,z})\otimes\nu_{0}+q^{-k}(E^{k}x_{z,z})\otimes\nu_{1} \label{eq:A13}\\
&=q^{k-z-1}[k]\nu_{0}\otimes(E^{k-1}x_{z,z})+\nu_{1}\otimes(E^{k}x_{z,z}) \label{eq:A14}\\
F^{k}x_{0,z+1}&=(F^{k}x_{0,z})\otimes\nu_{0}+q^{k-z-1}[k](F^{k-1}x_{0,z})\otimes\nu_{1} \label{eq:A15}\\
&=q^{-k}\nu_{0}\otimes(F^{k}x_{0,z})+[k]\nu_{1}\otimes(F^{k-1}x_{0,z}) \label{eq:A16}
\end{align}
These come from considering all contributions to the coefficient as different orderings of the integers $i_{1},...,i_{n}$, where each ordering describes the order in which the zeroes appeared. For the standard ordering with $i_{1}<i_{2}<...<i_{n}$, its contribution to the coefficient is just $q^{-z+i_{1}}q^{-z+i_{2}}...q^{-z+i_{n}}=q^{(-nz+\sum\limits_{j=1}^{n}i_{j})}$. Interchanging two integers in the ordering multiplies this by $q^{\pm 2}$, and the coefficient comes from considering all possible permutations.\\
\\
For  integers $1\leq i_{1}<i_{2}<...<i_{n}\leq z$, we have:
\begin{align}
\xi_{n,z}:=&\sum\limits_{1\leq i_{j}\leq z}q^{-2(\sum\limits_{j=1}^{n}i_{j})}=q^{-n-nz}\frac{([z]!)}{([n]!)([z-n]!)} \label{eq:A17}\\
\xi_{n,z}=& q^{-2z}\xi_{n-1,z-1}+\xi_{n,z-1}
\end{align}
where the recurrence relation comes from considering the two cases in $\xi_{n,z}$, when $i_{n}=z$ and when $i_{n}\neq z$.  
\end{appendices}
\subsection*{Acknowledgements}
The author's research was supported by EPSRC DTP grant EP/K502819/1. The author would like to thank the Isaac Newton Institute for Mathematical Sciences, Cambridge, for support and hospitality during the programme Operator Algebras: Subfactors and their applications, where work on this paper was undertaken. This work was supported by EPSRC grant no EP/K032208/1. The author thanks David Evans and Azat Gainutdinov for their comments.

\bibliographystyle{abbrv}

\end{document}